\newtheorem{thm}{Theorem}[section]
\newtheorem{prop}[thm]{Proposition}
\theoremstyle{definition}
\theoremstyle{remark}
\numberwithin{equation}{section}
\begin{document}
\title[ functional equations on monoids]{A class of functional equations     on monoids  }
\author[B. Bouikhalene and  E. Elqorachi]{ Bouikhalene Belaid and  Elqorachi Elhoucien }
\begin{abstract} In \cite{05} B. Ebanks and H. Stetk{\ae}r obtained the solutions of the functional equation $f(xy)-f(\sigma(y)x)=g(x)h(y)$ where $\sigma$ is an
 involutive  automorphism and $f,g,h$  are complex-valued functions, in the setting of a group $G$ and a monoid $M$.
  Our main goal is to determine the  complex-valued solutions of the following more general version of this
equation, viz $f(xy)-\mu(y)f(\sigma(y)x)=g(x)h(y)$ where $\mu:
G\longrightarrow \mathbb{C}$ is a  multiplicative function such that
$\mu(x\sigma(x))=1$ for all $x\in G$. As an application we find the
complex-valued solutions $(f,g,h)$ on groups of the equation
$f(xy)+\mu(y)g(\sigma(y)x)=h(x)h(y)$.
 \end{abstract}

\maketitle


\section{Introduction}We recall that a semigroup $S$ is a
set equipped with an associative operation. We write the operation
multiplicatively. A monoid is a semigroup $M$ with identity element
that we denote $e$.  A function $\chi$ : $S\longrightarrow
\mathbb{C}$ is said to be multiplicative if
$\chi(xy)=\chi(x)\chi(y)$ for all
$x,y\in S.$\\
Let $S$ be a semigroup and $\sigma$ : $G\longrightarrow G$ an
involutive homomorphism, that is $\sigma(xy)=\sigma(x)\sigma(y)$ and
$\sigma(\sigma(x))=x$ for all $x,y\in G.$ \\
The complex-valued solutions of the following variant of
d'Alembert's functional equation
\begin{equation}\label{eq1}
f(xy)+f(\sigma(y)x)=2f(x)f(y),\;x,y\in S.
\end{equation} was determined by Stetk\ae r \cite{007}. They are the functions  of the form
$$f(x)=\frac{\chi+\chi\circ\sigma}{2},$$ where $\chi$ : $G\longrightarrow
\mathbb{C}$ is multiplictive.\\In the same year, Ebanks and Stetk\ae
r \cite{05} obtained on monoids the complex-valued solutions of the
following d'Alembert's other functional equation
\begin{equation}\label{eq2}
    f(xy)-f(\sigma(y)x)=g(x)h(y),\; x,y\in S.
\end{equation}
This functional equation contains, among others, an equation of
d'Alembert \cite{01,02,03}
\begin{equation}\label{eq3}
f(x+y)-f(x-y)=g(x)h(y),\;x,y\in \mathbb{R}\end{equation}
 the solutions of which are known on  abelian groups, and a functional
equation
\begin{equation}\label{eq4}
    f(x+y)-f(x+\sigma(y))=g(x)h(y),\;x,y\in G
\end{equation} studied by Stetk\ae r [12, Corollary III.5] on abelian groups $G$.\\
There are various ways of extending functional equations from
abelian groups to non-abelian groups. The $\mu$-d'Alembert
functional equation
\begin{equation}\label{eq5}
f(xy)+\mu(y)f(x\tau(y))=2f(x)f(y),\; x,y\in G\end{equation} which is
an extension of d'Alembert functional equation
\begin{equation}\label{eq6}
f(xy)+f(x\tau(y))=2f(x)f(y),\; x,y\in G,\end{equation}  where $\tau$
is an involutive of  $G$, is closely related to pre-d'Alembert
functions. It occurs in the literature. See Parnami, Singh and
Vasudeva \cite{parnami}, Davison  [5, Proposition 2.11], Ebanks and
Stetk\ae r \cite{st2}, Stetk\ae r [15, Lemma IV.4] and Yang [16,
Proposition 4.2]. The functional equation (\ref{eq5}) has been
treated systematically by Stetk\ae r \cite{st3}. The non-zero
solutions of (\ref{eq5})
 are the normalized traces of certain representations of $G$ on $\mathbb{C}^{2}$. Davison proved this via his
 work \cite{davison} on the pre-d'Alembert functional equation on
 monoids.\\The variant of Wilson's functional equation
 \begin{equation}\label{eq77777}
f(xy)+\mu(y)f(\tau(y)x)=2f(x)g(y),\; x,y\in G;\end{equation}
 \begin{equation}\label{eq7}
f(xy)+\mu(y)f(\sigma(y)x)=2f(x)g(y),\; x,y\in G\end{equation} with
$\mu\neq 1$ was recently studied on groups by Elqorachi and Redouani
\cite{elqorachi}.
\\ The complex-valued solutions  of equation (\ref{eq77777}) with $\tau(x)=x^{-1}$ and
$\mu(x)=1$ for all $x\in G$ are obtained on groups by Ebanks and
Stetk\ae r \cite{st2}.\\ The present paper complements and contains
the existing results about (\ref{eq2}) by finding the solutions
$f,g,h$ of the extension \begin{equation}\label{eq8}
    f(xy)-\mu(y)f(\sigma(y)x)=g(x)h(y),\; x,y\in S
\end{equation}of it to monoids that need not be abelian, because on non-abelian monoids the order of factors matters and
 involutions and involutive automorphisms differ. \\ As in  \cite{05} one of the main
ideas is to relate the functional equation (\ref{eq2}) to a sine
substraction law on monoids. In our case we need the solutions  of
the following version of the sine subtraction law
\begin{equation}\label{eq9}\mu(y)f(x\sigma(y))=f(x)g(y)-f(y)g(x), \; x,y\in
S.
\end{equation} These results are
obtained in Theorem 2.1. We need also the solutions of equation
(\ref{eq7}) on monoids. There are not in the literature, but we
derived them in Theorem 3.2.  In   section 4 we obtain on of the
main results about (\ref{eq8}). Furthermore, as an application we
find the complex-valued solutions $(f,g,h)$ of the functional
 equation\begin{equation}\label{EQ1}
f(xy)+\mu(y)g(\sigma(y)x)=h(x)h(y),\; x,y\in G\end{equation} on
groups and monoids in terms of multiplicative and additive
functions.
 \subsection{Notation and preliminary matters }
Throughout this paper $G$ denotes  a group and $S$ a semigroup. A
monoid is a semigroup $M$ with an identity element that we denote e.
The map $\sigma : S\longrightarrow S$ denotes an involutive
automorphism. That it is involutive means that $\sigma(\sigma(x)) =
x$ for all $x \in S$. The mapping $ \mu : S
\longrightarrow\mathbb{C}$ is a multiplicative function such that $
\mu(x\sigma(x))= 1$ for all $x \in S$.  If $\chi: S \longrightarrow
\mathbb{C}$ is a multiplicative  function such that $\chi\neq0$,
then $I_{\chi}=\{x\in S\; |\; \chi(x)=0\}$ is either empty or a
proper subset of $S$. Furthermore, $I_{\chi}$ is a two-sided ideal
in $S$ if not empty and $S\backslash I_{\chi}$ is a subsemigroup of
$S$. If $S$ is a topological space, then we let $\mathcal{C}(S)$
denote the algebra of continuous
functions from $S$ into $\mathbb{C}$.\\
For later use we need the following results. The next proposition
corresponds to Lemma 3.4 in [8].
\begin{prop}
Let $S$ be a semigroup, and suppose $f,g
:S\longrightarrow\mathbb{C}$ satisfy the sine addition law
\begin{equation}f(xy)=f(x)g(y)+f(y)g(x), \; x,y\in S\end{equation}
with $f\neq 0$. Then there exist multiplicative functions
$\chi_{1},\chi_{2}: G\longrightarrow \mathbb{C}$  such that
$$g=\frac{\chi_{1}+\chi_{2}}{2}.$$ Additionally we have the
following\\
i) If $\chi_{1}\neq \chi_{2}$ , then $f = c(\chi_{1}-\chi_{2})$ for some constant $c \in \mathbb{C}\setminus\{0\}$.\\
ii) If $\chi_{1}=\chi_{2}$, then letting $\chi:=\chi_{1}$ we have
$g=\chi$. If $S$ is a semigroup such that $S=\{xy\in :
x, y\in S\}$ (for instance a monoid), then $\chi\neq 0$.\\
If $S$ is a group, then there is an additive function $A: S\longrightarrow\mathbb{C}$, $A\neq 0$, such that $f=\chi A$. \\
If $S$ is a semigroup which is generated by its squares, then there
exists an additive function $A: S\setminus I_{\chi}\longrightarrow
\mathbb{C}$ for which
$$f(x)=\left\lbrace
\begin{array}{l}
\chi(x)A(x)\; \; for \;x\in S\setminus I_{\chi}\\
0    \; \; \;\; for \; x\in I_{\chi}.
\end{array}\right.$$
Furthermore, if $S$ is a topological group, or if $S$ is a
topological semigroup generated by its squares, and $f, g \in
\mathcal{C}(S)$, then $ \chi_{1}, \chi_{2}, \chi\in \mathcal{C}(S)$.
In the group case $A \in \mathcal{C}(S)$ and in the second case $A
\in \mathcal{C}(S\setminus I_{\chi})$.
\end{prop}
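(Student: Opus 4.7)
The plan is to reduce the sine addition law, together with a derived quadratic identity for $g$, to a pair of multiplicative eigenvalue functions of a $2\times 2$ matrix, and then to unpack case (ii) separately.

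First I would observe that the right-hand side $f(x)g(y)+f(y)g(x)$ is symmetric in $x,y$, whence $f(xy)=f(yx)$. Computing $f(xyz)$ in two ways (as $f(x\cdot yz)$ and as $f(xy\cdot z)$), expanding each by the sine addition law, and cancelling the common term $g(x)f(y)g(z)$ yields the Levi-Civita-type identity
\[
f(x)\bigl[g(yz)-g(y)g(z)\bigr] \;=\; f(z)\bigl[g(xy)-g(x)g(y)\bigr].
\]
Fix $a\in S$ with $f(a)\neq 0$. Setting $z=a$ above gives $g(xy)=g(x)g(y)+f(x)\phi(y)$ where $\phi(y):=[g(ya)-g(y)g(a)]/f(a)$; setting $x=a$ gives $g(yz)=g(y)g(z)+f(z)\psi(y)$ where $\psi(y):=[g(ay)-g(a)g(y)]/f(a)$. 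Comparing the two representations forces $f(y)\phi(z)=f(z)\psi(y)$, and then evaluating at $y=a$ shows $\phi$ is a scalar multiple of $f$, yielding a single constant $c\in\mathbb{C}$ with
\[
g(xy) \;=\; g(x)g(y) + c\,f(x)f(y).
\]

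Next, fix $d\in\mathbb{C}$ with $d^{2}=c$ and set $\chi_{1}:=g+d\,f$ and $\chi_{2}:=g-d\,f$. A direct expansion using both the sine addition law and the identity just derived shows
\[
\chi_{i}(xy)=\chi_{i}(x)\chi_{i}(y), \qquad i=1,2,
\]
so $\chi_{1}$ and $\chi_{2}$ are multiplicative on $S$. Inverting gives $g=(\chi_{1}+\chi_{2})/2$, and if $d\neq 0$ also $f=(\chi_{1}-\chi_{2})/(2d)$. When $c\neq 0$, $f\neq 0$ forces $\chi_{1}\neq\chi_{2}$, producing case (i) with constant $1/(2d)$. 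When $c=0$, the derived identity already says $g$ is multiplicative, so $\chi:=\chi_{1}=\chi_{2}=g$, giving case (ii); the sine addition law then collapses to $f(xy)=f(x)\chi(y)+\chi(x)f(y)$.

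It remains to extract the additive factor $A$ in case (ii). On a monoid with $S=S\cdot S$, $\chi\equiv 0$ would force $f\equiv 0$ on $S\cdot S=S$, contradicting $f\neq 0$; hence $\chi\neq 0$. For a group $G$, the character $\chi$ is nonvanishing, so $A:=f/\chi$ is defined on all of $G$, and dividing the sine addition law by $\chi(xy)$ yields $A(xy)=A(x)+A(y)$; $f\neq 0$ forces $A\neq 0$. For a semigroup generated by squares, the same calculation defines an additive $A=f/\chi$ on the subsemigroup $S\setminus I_{\chi}$, and one must then verify that $f$ vanishes throughout the ideal $I_{\chi}$. The starting point is $f(y^{2})=2\chi(y)f(y)$, which kills $f$ on every square inside $I_{\chi}$; iterating the sine addition law along a decomposition of an arbitrary element of $I_{\chi}$ as a product of squares propagates this vanishing. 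I expect this propagation step to be the main technical obstacle; the continuity statements in the topological setting follow at once from the explicit formulas $\chi_{1,2}=g\pm df$ and $A=f/\chi$.
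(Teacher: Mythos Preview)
The paper does not actually prove this proposition; it is quoted verbatim from the literature (the paper introduces it as ``The next proposition corresponds to Lemma 3.4 in [8]'' and gives no argument). So there is no in-paper proof to compare against.

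Your argument is correct and is, in fact, the standard derivation: the associativity computation $f((xy)z)=f(x(yz))$ yields the Levi--Civita relation, fixing $a$ with $f(a)\neq 0$ produces $g(xy)=g(x)g(y)+cf(x)f(y)$, and the diagonalising substitution $\chi_{1,2}=g\pm d f$ with $d^{2}=c$ splits off two multiplicative functions. The treatment of case~(ii) via $A=f/\chi$ is also the usual one.

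One remark on the step you flagged as the ``main technical obstacle'': it is in fact routine. If $x\in I_{\chi}$ and $x=s_{1}^{2}\cdots s_{k}^{2}$, then multiplicativity of $\chi$ forces $\chi(s_{i})=0$ for some $i$; hence $f(s_{i}^{2})=2\chi(s_{i})f(s_{i})=0$ and $\chi(s_{i}^{2})=0$. A single application of the sine addition law to the split $x=(s_{1}^{2}\cdots s_{i-1}^{2})\cdot(s_{i}^{2}\cdots s_{k}^{2})$ (or $x=s_{1}^{2}\cdot(s_{2}^{2}\cdots s_{k}^{2})$ when $i=1$) then gives $f(x)=0$, since both summands vanish. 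No iteration is needed. Your continuity claims follow exactly as you say from the explicit formulas $\chi_{1,2}=g\pm d f$ and $A=f/\chi$.
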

\section{$\mu$-sine subtraction law on a group and on a monoid}
In this section we deal with a new version of the sine subtraction
law
\begin{equation}\mu(y)k(x\sigma(y))=k(x)l(y)-k(y)l(x), \; x,y\in S,
\end{equation}
where $k, l$ are complex valued functions and $\mu$ is a
multiplicative function. The new feature is the introduction of the
function $\mu.$  We shall say that $k$ satisfies the $\mu$-sine
subtraction law with companion function $l$. If $S$ is a topological
semigroup and $k, l$ satisfy (2.1) such that $k\neq 0$ and $k$ is a
continuous function then $l$ is also a continuous function.
In the case where  $\mu=1$ and $G$ is a topological group, the functional equation (2.1) was solved in [8].\\
Here we focus exclusively on (2.1), and we include nothing about
other extensions of the cosine, sine addition and subtraction laws.
The next theorem is the analogue of Theorem 3.2 in [8].
\begin{thm}
Let $G$ be a group and let $\sigma: G\longrightarrow\mathbb{C}$ be a
involutive automorphism. Let $\mu: G\longrightarrow \mathbb{C}$ be a
multiplicative function such that $\mu(x\sigma(x))=1$ for all $x\in
G$. The solution $k, l: G\longrightarrow \mathbb{C}$ of the
$\mu$-sine subtraction law (2.1) with $k\neq 0$ are the following
pairs of functions, where $\chi: G\longrightarrow
\mathbb{C}\setminus\{0\}$
 denotes a character and $c_{1}\in \mathbb{C}$, $c_{2}\in \mathbb{C}\setminus\{0\}$ are constants.\\
i) If $\chi\neq \mu\;\chi\circ\sigma$, then
$$k=c_{2}\frac{\chi-\mu\;\chi\circ\sigma}{2}, \; l=\frac{\chi+\mu\;\chi\circ\sigma}{2}+c_{1}\frac{\chi-\mu\;\chi\circ\sigma}{2}.$$
ii) If $\chi=\mu\;\chi\circ\sigma$, then
$$k=\chi A,\;\; l=\chi(1+c_{1}A)$$
where $A: G\longrightarrow \mathbb{C}$ is an additive function such that $A\circ \sigma =-A\neq 0$.\\
Furthermore, if $G$ is a topological group and $k \in
\mathcal{C}(G)$, then $l, \chi, \mu\;\chi\circ\sigma, A \in
\mathcal{C}(G)$.
\end{thm}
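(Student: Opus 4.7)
The plan is to reduce the $\mu$-sine subtraction law (2.1) to the sine addition law of Proposition 1.1, and then to pin down the characters that appear by exploiting the extra symmetry built in by $\sigma$ and $\mu$.

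First I would extract the basic normalizations. Setting $x=y$ in (2.1) gives $k(y\sigma(y))=0$, so in particular $k(e)=0$; then $y=e$ forces $l(e)=1$ (using $k\neq 0$), and $x=e$ produces the key identity
\[ \mu(y)k(\sigma(y)) = -k(y), \qquad y\in G. \qquad (\ast) \]

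Next I would convert (2.1) into a sine addition law. Replacing $y$ by $\sigma(y)$ in (2.1), multiplying by $\mu(y)$, and using $\mu(y)\mu(\sigma(y))=1$ together with $(\ast)$ yields
\[ k(xy) = L(y)k(x) + l(x)k(y), \qquad L(y) := \mu(y)l(\sigma(y)). \]
The analogous substitution $x\mapsto\sigma(x)$ (and multiplying by $\mu(x)$, using $(\ast)$ twice) produces
\[ k(xy) = l(y)k(x) + \psi(x)k(y), \qquad \psi(x):=\mu(x)l(\sigma(x)). \]
Subtracting gives $[L(y)-l(y)]k(x)=[\psi(x)-l(x)]k(y)$; fixing an $x_0$ with $k(x_0)\neq 0$, separation of variables yields a constant $c\in\mathbb{C}$ with $L=\psi=l+ck$. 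Setting $l^* := l+(c/2)k$ absorbs the cross term into a genuine sine addition law $k(xy)=k(x)l^*(y)+l^*(x)k(y)$.

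I would then apply Proposition 1.1 to the pair $(k,l^*)$. It produces either (I) $l^*=(\chi_1+\chi_2)/2$ and $k=c_2(\chi_1-\chi_2)/2$ with $\chi_1\neq\chi_2$ and $c_2\in\mathbb{C}\setminus\{0\}$, or (II) $l^*=\chi$ and $k=\chi A$ with $A$ additive and $A\neq 0$. Meanwhile, the relation $L=l+ck$, rewritten via $l=l^*-(c/2)k$ together with $(\ast)$, is equivalent to $\mu\cdot l^*\circ\sigma = l^*$. In case~(I), the linear independence of distinct multiplicative functions forces $\{\mu\chi_1\circ\sigma,\mu\chi_2\circ\sigma\}=\{\chi_1,\chi_2\}$; the diagonal option $\mu\chi_i\circ\sigma=\chi_i$ is immediately ruled out by $(\ast)$, which would then read $k=-k$. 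Thus $\chi_2=\mu\chi_1\circ\sigma$, and tracking $l=l^*-(c/2)k$ with $\chi:=\chi_1$ gives form~(i). In case~(II), $\mu\cdot l^*\circ\sigma=l^*$ becomes $\chi=\mu\chi\circ\sigma$, and $(\ast)$ reduces to $A\circ\sigma=-A$, giving form~(ii).

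The hardest step, I expect, will be the second one: noticing that symmetrizing (2.1) with respect to either argument yields \emph{two} different sine-addition-style expressions for $k(xy)$, and that the discrepancy between them is forced to be a constant multiple of $k$, which is precisely what is needed to absorb the cross term and meet the hypotheses of Proposition 1.1. A routine converse computation confirms that the pairs in (i) and (ii) do satisfy (2.1). For the continuity statement: picking $x_0$ with $k(x_0)\neq 0$ and evaluating (2.1) at $x=x_0$ expresses $l$ continuously in terms of $k$, so $l\in\mathcal{C}(G)$; then $l^*\in\mathcal{C}(G)$, and the continuity of $\chi$ (resp.\ $\chi_1,\chi_2$) and $A$ follows from the continuity clause of Proposition 1.1, after which $\mu\chi\circ\sigma$ is continuous since it coincides with the second multiplicative function produced by that proposition.
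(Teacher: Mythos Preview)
Your proposal is correct and follows essentially the same strategy as the paper: reduce (2.1) to the sine addition law $k(xy)=k(x)l^*(y)+l^*(x)k(y)$ with the \emph{same} companion function $l^*=l^+=\tfrac{l+\mu\, l\circ\sigma}{2}$, apply Proposition~1.1, and then use the symmetries $\mu\, l^*\circ\sigma=l^*$ and $\mu\, k\circ\sigma=-k$ to pin down the characters. The only difference is tactical: the paper obtains the key relation $k(x)\bigl(l(y)-\mu(y)l(\sigma(y))\bigr)=k(y)\bigl(l(x)-\mu(x)l(\sigma(x))\bigr)$ by a direct chain of substitutions, whereas you reach the equivalent statement $L-l\in\mathbb{C}k$ by writing two asymmetric addition formulas for $k(xy)$ and subtracting---a slightly cleaner derivation of the same intermediate result.
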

\begin{proof}
By interchanging $x$ and $y$ in (2.1) we get that
$\mu(x)k(y\sigma(x))=-\mu(y)k(x\sigma(y))$ for all $x,y\in G$. By
setting $y=e$ we get that $k(x)=-\mu(x)k(\sigma(x))$ for all $x\in
G$. Using this,  equation (2.1) and the fact that
$\mu(x\sigma(x))=1$ we get for all $x,y\in G$ that
\begin{eqnarray*}
&&
k(x)[l(y)-\mu(y)l(\sigma(y))]-k(y)[l(x)-\mu(x)l(\sigma(x))]\\&&=k(x)l(y)-\mu(y)k(x)l(\sigma(y))-k(y)l(x)+\mu(x)k(y)l(\sigma(y))\\
&&=\mu(y)k(x\sigma(y)+\mu(xy)k(\sigma(x))l(\sigma(y))-\mu(xy)k(\sigma(y))l(\sigma(x))\\
&=&\mu(y)k(x\sigma(y))+\mu(xy)[k(\sigma(x))l(\sigma(y))-k(\sigma(y))l(\sigma(x))]\\&&=
\mu(y)k(x\sigma(y))+\mu(xy)\mu(\sigma(y))k(\sigma(x)y)\\&&=
\mu(y)k(x\sigma(y))+\mu(x)k(\sigma(x)y)\\&&=
mu(y)k(x\sigma(y))-\mu(x)\mu(\sigma(x)y)k(x\sigma(y))\\&&=
\mu(y)k(x\sigma(y))-\mu(y)k(x\sigma(y))\\&&= 0.
\end{eqnarray*}
So that we get for all $x,y\in G$  that
\begin{equation} k(x)[l(y)-\mu(y)l(\sigma(y))]=k(y)[l(x)-\mu(x)l(\sigma(x))].\end{equation} By using some ideas from [8]
we let $l^{+}(x)=\frac{l(x)+\mu(x)l(\sigma(x))}{2}$ and
$l^{-}(x)=\frac{l(x)-\mu(x)l(\sigma(x))}{2}$ for all $x\in G$. We
have $l=l^{+}+l^{-}$, $l^{+}(\sigma(x))=\mu(\sigma(x))l^{+}(x)$ and
$l^{-}(\sigma(x))=-\mu(\sigma(x))l^{-}(x)$ for all $x\in G$. From
(2.2) we have for all $x,y\in G$ that $k(x)l^{-}(y)=k(y)l^{-}(x)$.
Since $k\neq 0$  there exists an $x_{0}\in G$ such that
$k(x_{0})\neq 0$. Thus we have
$l^{-}(y)=\frac{l^{-}(x_{0})}{k(x_{0})}k(y)=ck(y)$ for all $y\in G$.
Then $l^{-}=ck$. On the other hand by replacing $l$ by $l^{+}+l^{-}$
in (2.1) and by using $k(x)l^{-}(y)=k(y)l^{-}(x)$ we get that
\begin{equation}
\mu(y)k(x\sigma(y))=k(x)l^{+}(y)-k(y)l^{+}(x), \; x,y\in G
\end{equation}
Replacing $y$ by $\sigma(y)$ in (2.3) we get that
\begin{equation}
k(xy)=k(x)l^{+}(y)+k(y)l^{+}(x), \; x,y\in G.
\end{equation}
According to Proposition 1.1 we have\\
either i) or ii) $k=c_{1}(\chi_{1}-\chi_{2})$ and
$l^{+}=\frac{\chi_{1}+\chi_{2}}{2}$ where $c_{1}\in
\mathbb{C}\setminus\{0\}$. Since $k(x)=-\mu(x)k(\sigma(x))$ and
$l^{+}(x)=\mu(x)l^{+}(\sigma(x))$ for all $x\in G$ we get that
$c_{1}(\chi_{1}-\chi_{2})=-c_{1}\mu(\chi_{1}\circ
\sigma-\chi_{2}\circ \sigma)$ and $\mu(\chi_{1}\circ
\sigma+\chi_{2}\circ \sigma)=\chi_{1}+\chi_{2}$. Then
$\chi_{2}=\mu\chi_{1}\circ \sigma$,
$l^{+}=\frac{\chi_{1}+\mu\chi_{1}\circ \sigma}{2}$ and
$k=c_{1}\frac{\chi_{1}-\mu\chi_{1}\circ \sigma}{2}$. Since $l^{-}
=ck=cc_{1}\frac{\chi_{1}-\mu\chi_{1}\circ
\sigma}{2}=c_{2}\frac{\chi_{1}-\mu\chi_{1}\circ \sigma}{2}$ where
$c_{2}\in \mathbb{C}\setminus\{0\}$.
By using the fact $l=l^{-}+l^{+}$ we get (i).\\
ii) we have $l^{+}=\chi$ and $k=\chi A$. Since
$l^{+}(\sigma(x))=\mu(\sigma(x))l^{+}(x)$ for all $x\in G$ we get
that $\chi=\mu\chi\circ \sigma$. From $k(x)=-\mu(x)k(\sigma(x))$ for
all $x\in G$ we get that $A=-A\circ \sigma$. By using the fact
$l=l^{-}+l^{+}=c\chi A+\chi=\chi(cA+1)$. This completes
 the proof.
\end{proof}
In the next proposition we extend Theorem 2.1 to monoids.
\begin{prop}
Let $M$ be a monoid and let $\sigma: M\longrightarrow M$ be a
involutive automorphism. Let $\mu: M\longrightarrow \mathbb{C}$ be a
multiplicative function such that $\mu(x\sigma(x))=1$ for all $x\in
M$. The solution $k, l: M\longrightarrow \mathbb{C}$ of the
$\mu$-sine
 subtraction law (2.1) with $k\neq 0$ are the following pairs of functions,
 where $\chi: M\longrightarrow \mathbb{C}\setminus\{0\}$ denotes a multiplicative
 function and $\chi(e)=1$ and $c_{1}\in \mathbb{C}$, $c_{2}\in \mathbb{C}\setminus\{0\}$ are constants :\\
i) If $\chi\neq \mu\;\chi\circ\sigma$, then
$$k=c_{1}\frac{\chi-\mu\;\chi\circ\sigma}{2}, \; l=\frac{\chi+\mu\;\chi\circ\sigma}{2}+c_{2}\frac{\chi-\mu\;\chi\circ\sigma}{2}.$$
ii) If $\chi=\mu\;\chi\circ\sigma$  and $S$ is generated by its
squares, then
$$k(x)=\chi(x) A(x),\;\; l(x)=\chi(x)(1+c_{1}A(x))\;  \forall x\in M\backslash I_{\chi} $$
$$l(x)=k(x)=0, \;\;\;\; \forall x\in  I_{\chi}$$
where $A: M\longrightarrow \mathbb{C}\setminus\{0\}$ is an additive function such that $A\circ \sigma =- A\neq  0$.\\
Furthermore, if $M$ is a topological monoid, and $k \in
\mathcal{C}(M)$, then $l, \chi, \mu\chi\circ \sigma \in
\mathcal{C}(M)$ and $A \in \mathcal{C}(M\backslash I_{\chi})$.
\end{prop}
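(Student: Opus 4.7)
The plan is to follow the proof of Theorem 2.1 step by step, with the modifications needed to accommodate the fact that on a monoid a multiplicative function $\chi$ may vanish on a proper ideal $I_\chi$. Since $M$ has an identity $e$, and $\sigma(e)=e$ while $\mu(e)=1$ (the latter from $\mu(e\sigma(e))=\mu(e)=1$), I can substitute $y=e$ in (2.1) after first swapping $x$ and $y$ and using antisymmetry of the right-hand side, arriving at $k(x)=-\mu(x)k(\sigma(x))$ for every $x\in M$, exactly as in the group case.

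Feeding this antisymmetry back into (2.1) and performing the same calculation as in the proof of Theorem 2.1 yields
\[ k(x)\bigl[l(y)-\mu(y)l(\sigma(y))\bigr]=k(y)\bigl[l(x)-\mu(x)l(\sigma(x))\bigr] \]
for all $x,y\in M$. Setting $l^{+}=(l+\mu\,l\circ\sigma)/2$ and $l^{-}=(l-\mu\,l\circ\sigma)/2$, this becomes $k(x)l^{-}(y)=k(y)l^{-}(x)$; choosing $x_{0}$ with $k(x_{0})\neq 0$ gives a scalar $c\in\mathbb{C}$ such that $l^{-}=ck$. Substituting $l=l^{+}+l^{-}$ back into (2.1), cancelling the $l^{-}$ contribution via the identity just derived, and then replacing $y$ by $\sigma(y)$ produces the sine addition law $k(xy)=k(x)l^{+}(y)+k(y)l^{+}(x)$ on $M$, at which point Proposition 1.1 applies to the pair $(k,l^{+})$.

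In the case $\chi_{1}\neq\chi_{2}$ one has $l^{+}=(\chi_{1}+\chi_{2})/2$ and $k$ proportional to $\chi_{1}-\chi_{2}$; combining $l^{+}\circ\sigma=(\mu\circ\sigma)\,l^{+}$ with $k\circ\sigma=-(\mu\circ\sigma)\,k$, and using $\mu(x)\mu(\sigma(x))=1$, forces $\chi_{2}=\mu\,\chi_{1}\circ\sigma$, which, with $\chi:=\chi_{1}$, gives case (i). In the case $\chi_{1}=\chi_{2}=:\chi$, one has $l^{+}=\chi$ and the "generated by squares" half of Proposition 1.1 supplies an additive $A:M\setminus I_{\chi}\to\mathbb{C}$ with $k=\chi A$ off $I_{\chi}$ and $k=0$ on $I_{\chi}$; the $\mu$-symmetry of $l^{+}$ gives $\chi=\mu\,\chi\circ\sigma$, the $\mu$-antisymmetry of $k$ gives $A\circ\sigma=-A$, and $l=l^{+}+l^{-}=\chi(1+cA)$ on $M\setminus I_{\chi}$, while both $l^{+}=\chi$ and $l^{-}=ck$ vanish on $I_{\chi}$.

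The main obstacle beyond routine verification is the bookkeeping around the ideal $I_{\chi}$: one must check that $\sigma$ preserves $I_{\chi}$, which follows from $\chi=\mu\,\chi\circ\sigma$ together with the observation that $\mu$ never vanishes (since $\mu(x)\mu(\sigma(x))=1$), so the relation $A\circ\sigma=-A$ is meaningful on $M\setminus I_{\chi}$. The continuity conclusions are then automatic: $k\in\mathcal{C}(M)$ forces $l^{-}=ck\in\mathcal{C}(M)$, and Proposition 1.1 supplies continuity of $\chi$, $\mu\,\chi\circ\sigma$ and $A$ on their natural domains, so that $l^{+}$ and $l=l^{+}+l^{-}$ are continuous as asserted.
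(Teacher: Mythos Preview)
Your proposal is correct and follows essentially the same approach as the paper. The paper's own proof is a one-line reference (``By the same way in Proposition~3.6 in [8] and by using Theorem~2.1 and the Proposition~1.1 we get the proof''), and what you have written is precisely that adaptation spelled out in detail: rerun the argument of Theorem~2.1 verbatim on the monoid, invoke the monoid clause of Proposition~1.1 for the sine addition law, and handle the ideal $I_{\chi}$ in case~(ii).
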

\begin{proof}
By the same way in Proposition 3.6 in [8] and by using  Theorem 2.1
and the Proposition 1.1 we get the proof.
\end{proof}
\section{A variant of Wilson's functional equation on monoids} The
solutions of the functional equation (\ref{eq7}) on groups are
obtained in \cite{elqorachi}. More precisely, we have the following
theorem.\begin{thm} Let $G$ be a group, let $\sigma$ :
$G\longrightarrow G$ a homomorphism  such that $\sigma\circ\sigma =
I$, where $I$ denotes the identity map, and $\mu$ :
$G\longrightarrow \mathbb{C}$ be a  multiplicative function  such
that $\mu(x\sigma(x)) = 1$ for all $x\in G$. The solutions $f, g$ of
the functional equation (\ref{eq7}) are the following pairs of
functions, where $\chi$ : $G\longrightarrow \mathbb{C}$ denotes a
 multiplicative function and $c,\alpha\in \mathbb{C}^{\ast}$\\
(i) $f = 0$ and $g$ arbitrary.
\\(ii) $g = \frac{\chi+\mu \chi\circ \sigma}{ 2}$ and $f = \alpha g$.\\
 (iii) $g = \frac{\chi+\mu \chi\circ \sigma}{ 2}$ and $f = (c + \alpha/2 )\chi-(c -\alpha/2
 )\chi\circ \sigma$
with $(\mu-1)\chi = (\mu-1)\chi\circ \sigma$.\\
 (iv) $g = \chi$  and  $f=\chi(a+\alpha)$, where where $\chi=\mu\;\chi\circ\sigma$ and  $a$ is an additive map which satisfies
$a\circ\sigma+ a = 0$.\end{thm}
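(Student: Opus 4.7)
The plan is to reduce (\ref{eq7}) to the $\mu$-sine subtraction law (2.1) via the canonical decomposition
$f^{\pm}(x):=\tfrac{1}{2}\bigl(f(x)\pm\mu(x)f(\sigma(x))\bigr),\quad f=f^{+}+f^{-},$
which satisfies $\mu(x)f^{+}(\sigma(x))=f^{+}(x)$ and $\mu(x)f^{-}(\sigma(x))=-f^{-}(x)$, and then to apply Theorem 2.1 to the pair $(f^{-},g)$. The trivial case $f=0$ yields (i), so assume henceforth that $f\not\equiv 0$.

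Setting $y=e$ in (\ref{eq7}) forces $g(e)=1$, and setting $x=e$ gives $f+\mu(f\circ\sigma)=2\alpha g$ with $\alpha:=f(e)$, i.e.\ $f^{+}=\alpha g$. The central step, and also the main obstacle, is to show that $g$ is itself $\mu$-symmetric, $g=\mu\,g\circ\sigma$. I would derive this by combining (\ref{eq7}) evaluated at $(x,y)$, at $(x,\sigma(y))$ (multiplied by $\mu(y)$), at $(\sigma(x),y)$, and at $(y,x)$, and using the identity $\mu(x)f(\sigma(x))=2\alpha g(x)-f(x)$ (a consequence of $f^{+}=\alpha g$) to eliminate every occurrence of $f(\sigma(x))$. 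After cancellation the four identities collapse to $f(x)\bigl[\mu(y)g(\sigma(y))-g(y)\bigr]=0$, forcing $g=\mu\,g\circ\sigma$ because $f\not\equiv 0$. Reversing the same computation then verifies that $(k,l):=(f^{-},g)$ satisfies the $\mu$-sine subtraction law $\mu(y)f^{-}(x\sigma(y))=f^{-}(x)g(y)-f^{-}(y)g(x)$ of (2.1).

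Theorem 2.1 now applies to $(f^{-},g)$, and the extra $\mu$-symmetry of $g$ forces the constant $c_{1}$ appearing there to vanish (the companion function $l$ in Theorem 2.1 decomposes into a $\mu$-symmetric piece $(\chi+\mu\,\chi\circ\sigma)/2$ plus $c_{1}$ times a $\mu$-antisymmetric piece). If $f^{-}\equiv 0$ then $f=\alpha g$, and substituting back into (\ref{eq7}) shows that $g$ solves the $\mu$-d'Alembert equation $g(xy)+\mu(y)g(\sigma(y)x)=2g(x)g(y)$, whose solution $g=(\chi+\mu\,\chi\circ\sigma)/2$ gives case (ii). If $f^{-}\neq 0$ and $\chi\neq\mu\,\chi\circ\sigma$, case (i) of Theorem 2.1 supplies $f^{-}=c_{2}(\chi-\mu\,\chi\circ\sigma)/2$ and $g=(\chi+\mu\,\chi\circ\sigma)/2$; writing $f=\alpha g+f^{-}$ and re-parametrising produces case (iii). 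If $f^{-}\neq 0$ and $\chi=\mu\,\chi\circ\sigma$, case (ii) of Theorem 2.1 gives $g=\chi$ and $f^{-}=\chi A$ with $A$ additive and $A\circ\sigma=-A\neq 0$, so $f=\chi(\alpha+A)$, which is case (iv). A routine verification that each listed pair $(f,g)$ satisfies (\ref{eq7}) completes the argument; the only delicate part of the whole proof is the four-fold substitution that extracts the $\mu$-symmetry of $g$, since (\ref{eq7}) is not itself symmetric in $x$ and $y$ ($\sigma$ occurs only on the second argument).
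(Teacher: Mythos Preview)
The paper does not give its own proof of this theorem: it simply quotes the result from Elqorachi--Redouani \cite{elqorachi} (``The solutions of the functional equation (\ref{eq7}) on groups are obtained in \cite{elqorachi}''), and only the monoid extension (Theorem~3.2) is argued in the present paper. So there is no in-paper proof to compare against.

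Your strategy---decompose $f=f^{+}+f^{-}$, identify $f^{+}=\alpha g$, prove $g=\mu\,g\circ\sigma$, deduce that $(f^{-},g)$ satisfies the $\mu$-sine subtraction law (2.1), and invoke Theorem~2.1---is exactly parallel to the paper's own proof of Theorem~2.1 (where the companion function $l$ is split as $l=l^{+}+l^{-}$). Once $g=\mu\,g\circ\sigma$ is known, the reduction is indeed clean: from (\ref{eq7}) at $(x,\sigma(y))$ times $\mu(y)$ and at $(y,x)$ one gets $2\mu(y)f^{-}(x\sigma(y))=2f(x)g(y)-2f(y)g(x)$, and the $f^{+}$-parts cancel because $f^{+}=\alpha g$.

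The one soft spot is your ``four-fold substitution'' for $g=\mu\,g\circ\sigma$: the claim that the four identities collapse directly to $f(x)\bigl[\mu(y)g(\sigma(y))-g(y)\bigr]=0$ does not come out as stated when one writes the terms down. A cleaner route: if $\alpha=f(e)\neq 0$ then $g=f^{+}/\alpha$ is $\mu$-symmetric by construction; if $\alpha=0$ then $f=f^{-}$ satisfies $\mu(x)f(\sigma(x))=-f(x)$, and applying (\ref{eq7}) at $(\sigma(x),\sigma(y))$ followed by multiplication by $\mu(xy)$ and this antisymmetry reproduces the left side of (\ref{eq7}) at $(x,y)$, forcing $2f(x)g(y)=2f(x)\mu(y)g(\sigma(y))$. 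Either way $g=\mu\,g\circ\sigma$. You should also note that your output in the case $\chi\neq\mu\,\chi\circ\sigma$ is $f=\tfrac{\alpha+c_{2}}{2}\chi+\tfrac{\alpha-c_{2}}{2}\mu\,\chi\circ\sigma$, which is written in terms of $\mu\,\chi\circ\sigma$ rather than $\chi\circ\sigma$; matching this to the paper's stated form~(iii) and recovering the side condition $(\mu-1)\chi=(\mu-1)\chi\circ\sigma$ is not just cosmetic re-parametrisation and deserves a sentence.
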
 We shall now extend this result to
monoids.
\begin{thm} Let $M$ be a monoid, let $\sigma$ :
$M\longrightarrow M$ a homomorphism  such that $\sigma\circ\sigma =
I$, where $I$ denotes the identity map, and $\mu$ :
$M\longrightarrow \mathbb{C}$ be a  multiplicative function   such
that $\mu(x\sigma(x)) = 1$ for all $x\in M$. The solutions $f, g$ of
the functional equation (\ref{eq7}) are the following pairs of
functions, where $\chi$ : $M\longrightarrow \mathbb{C}$ denotes a
 multiplicative function and $c,\alpha\in \mathbb{C}^{\ast}$\\
(i) $f = 0$ and $g$ arbitrary.
\\(ii) $g = \frac{\chi+\mu \chi\circ \sigma}{ 2}$ and $f = \alpha g$.\\
 (iii) $g = \frac{\chi+\mu \chi\circ \sigma}{ 2}$ and $f = (c + \alpha/2 )\chi-(c -\alpha/2
 )\chi\circ \sigma$
with $(\mu-1)\chi = (\mu-1)\chi\circ \sigma$.\\
 (iv) $g = \chi$, $f_e(xy)=f_e(x)\chi(y)+f_e(y)\chi(x)$
   for all $x,y\in S$ and with $f=f_e +f(e)\chi$. Furthermore, if  $M$ is a monoid which is generated by its squares, then $\chi=\mu\;\chi\circ\sigma$, there exists an additive function
 $a$ : $M\backslash I_{\chi}\longrightarrow \mathbb{C}$ for which $a\circ\sigma+ a =
 0$, $$f(x)=\left\lbrace
\begin{array}{l}
\chi(x)(a(x)+\alpha)\; \; for \;x\in M\setminus I_{\chi}\\
0    \; \; \;\; for \; x\in I_{\chi}
\end{array}\right.$$ Indeed, If $M$ is a topological group, or
$M$ is a topological monoid generated by its squares, $f,g,\mu\in
C(M)$, and $\sigma$ : $M\longrightarrow M$ is continuous, then
$\chi\in C(M)$. In the group case $a\in C(M)$ and in the second case
$a\in C(M\backslash I_{\chi})$.
   \end{thm}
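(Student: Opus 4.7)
The plan is to follow the argument of Theorem 3.1 from \cite{elqorachi}, using Proposition 2.2 and the monoid form of Proposition 1.1 in place of their group counterparts. Assume $(f,g)$ solves (\ref{eq7}) with $f \neq 0$ (case (i) is trivial). Putting $x = e$ gives the fundamental identity
\begin{equation*}
f(y) + \mu(y) f(\sigma(y)) = 2 f(e) g(y), \quad y \in M,
\end{equation*}
which describes the symmetric part of $f$ under the twisted involution and is the starting point for everything below.

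Next I would follow the standard technique for Wilson-type equations: combine (\ref{eq7}) with its version under $y \mapsto \sigma(y)$, use $\mu(y)\mu(\sigma(y)) = 1$, and reduce the pair $(f,g)$ to a $\mu$-sine subtraction law of type (2.1) for a suitable auxiliary pair. Proposition 2.2 then forces either $g = \frac{\chi + \mu\chi\circ\sigma}{2}$ for a multiplicative $\chi$ with $\chi \neq \mu\chi\circ\sigma$, or $g = \chi$ with $\chi = \mu\chi\circ\sigma$. In the first alternative, reinjecting into (\ref{eq7}) and comparing with the $x = e$ relation pins down $f$ to be either proportional to $g$ (case (ii)) or of the form $(c + \alpha/2)\chi - (c - \alpha/2)\chi\circ\sigma$ subject to $(\mu - 1)\chi = (\mu - 1)\chi\circ\sigma$ (case (iii)); these arguments mirror the group proof of Theorem 3.1 verbatim and are unaffected by any zero ideal of $\chi$.

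Case (iv), with $g = \chi$ and $\chi = \mu\chi\circ\sigma$, is where genuine new work is needed. Set $f_e := f - f(e)\chi$; since $\chi$ itself solves (\ref{eq7}) (by multiplicativity and $\chi = \mu\chi\circ\sigma$), $f_e$ is also a solution and satisfies $f_e(e) = 0$. The $x = e$ relation for $f_e$ reduces to the antisymmetry $\mu(y) f_e(\sigma(y)) = -f_e(y)$, which in turn yields $\mu(y) f_e(\sigma(y)x) = -\mu(x) f_e(\sigma(x)y)$. A careful manipulation of (\ref{eq7}) for $f_e$, combined with the substitution $y \mapsto \sigma(y)$ and the identities $\chi = \mu\chi\circ\sigma$ and $\mu(y\sigma(y)) = 1$, extracts the sine addition law
\begin{equation*}
f_e(xy) = f_e(x)\chi(y) + f_e(y)\chi(x), \quad x, y \in M,
\end{equation*}
which is precisely the general-monoid content of (iv).

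When $M$ is additionally generated by its squares, Proposition 1.1 applied to the pair $(f_e, \chi)$ furnishes an additive $a : M \setminus I_\chi \to \mathbb{C}$ with $f_e(x) = \chi(x) a(x)$ off $I_\chi$ and $f_e \equiv 0$ on $I_\chi$; the relation $\mu(y) f_e(\sigma(y)) = -f_e(y)$ restricted to $M \setminus I_\chi$ translates into $a + a \circ \sigma = 0$, and setting $\alpha = f(e)$ recovers the displayed formula. The principal obstacle is the possible zero ideal $I_\chi$: on a monoid $\chi$ may vanish on a proper ideal, so one cannot simply divide through by $\chi$, and the generated-by-squares hypothesis is exactly what Proposition 1.1 needs in order to force $f_e$ to vanish on $I_\chi$ and to guarantee that $a$ is genuinely additive on the complementary subsemigroup. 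The continuity assertions at the end follow directly from the corresponding clauses in Propositions 1.1 and 2.2.
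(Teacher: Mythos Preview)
Your overall plan matches the paper's: defer cases (i)--(iii) and the sine addition law for $f_e$ to the group-case argument of \cite{elqorachi}, and then in case (iv) invoke Proposition~1.1 on the pair $(f_e,\chi)$ under the generated-by-squares hypothesis to extract the additive $a$ on $M\setminus I_\chi$.

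The one substantive divergence is in how you obtain $\chi=\mu\,\chi\circ\sigma$ in case (iv). You claim it up front as output of Proposition~2.2, and then use it to argue that $\chi$ itself solves (\ref{eq7}), so that $f_e=f-f(e)\chi$ is again a solution. But the reduction of (\ref{eq7}) to a $\mu$-sine subtraction law for ``a suitable auxiliary pair'' is never made explicit, and Proposition~2.2(ii) as stated already presupposes the generated-by-squares hypothesis, whereas the sine addition law for $f_e$ is asserted for an arbitrary monoid. The paper proceeds in the opposite order: it takes the sine addition law for $f_e$ directly from \cite{elqorachi}, applies Proposition~1.1 to get the explicit piecewise form of $f$, observes from that form (and the fact that $I_\chi$ is a two-sided ideal) that $f$ is central, i.e.\ $f(xy)=f(yx)$, rewrites (\ref{eq7}) as $f(xy)+\mu(y)f(x\sigma(y))=2f(x)\chi(y)$, and only then reads off $\chi=\mu\,\chi\circ\sigma$ by substituting $y\mapsto\sigma(y)$ and comparing. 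The relation $a+a\circ\sigma=0$ is then obtained by plugging the explicit form of $f$ back into this rewritten equation. Your route via the antisymmetry $\mu(y)f_e(\sigma(y))=-f_e(y)$ to get $a+a\circ\sigma=0$ is cleaner once $\chi=\mu\,\chi\circ\sigma$ is in hand, but you should either spell out the reduction to Proposition~2.2 carefully or, as the paper does, postpone $\chi=\mu\,\chi\circ\sigma$ until after the explicit form of $f$ and the centrality step are available.
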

   \begin{proof}Verifying  that the stated pairs of functions
   constitute solutions consists of simple computations. To see the
   converse, i.e., that any solution $f,g$ of (\ref{eq7}) is
   contained in one of the cases below,  we will use [8, Lemma 3.4] and [9, Theorem 3.1].
    All, except the last paragraphs of part (iv) and the continuity statements, is in Theorem 3.1
   in \cite{elqorachi}. Now, we assume that $M$ is a monoid generated by its squares. We use the notation used in the proof of Theorem 3.1
     in \cite{elqorachi}, in particular for the last paragraphs of part (iv) we have
   \begin{equation}\label{equation1}
   f_e(xy)=f_e(x)\chi(y)+f_e(y)\chi(x)\end{equation}
   for all $x,y\in S$ and with $f=f_e +f(e)\chi$. So, from [9, Lemma 3.4] we get $f(x)=0+f(e)\chi(x)=0+f(e)0=0$ if $x\in I_{\chi}$ and
   $f(x)=\chi(x)(a(x)+f(e))$ if $x\in S\backslash I_{\chi}$ and where $a$ is an additive function of $S\backslash I_{\chi}$.\\
    Now, we will verify that $\chi=\mu\;\chi\circ\sigma$ and
   $a\circ\sigma=-a$.  Since $f,g$ are solution of equation (\ref{eq7}) we have
 \begin{equation}\label{equation2}
   f(xy)+\mu(y)f(\sigma(y)x)=2f(x)\chi(y)\end{equation} for all $x,y\in M.$ By using the new expression of $f$ and the fact that $I_\chi$ is an
ideal, we get after an elementary computation that $f(xy)=f(yx)$ for
all $x,y\in M$. So, equation (\ref{equation2}) can be written as
follows
\begin{equation}\label{equation3}
   f(xy)+\mu(y)f(x\sigma(y))=2f(x)\chi(y),\;x,y\in
   M.\end{equation} By replacing $y$ by $\sigma(y)$ in
   (\ref{equation3}) and multiplying the result obtained by $\mu(y)$
   we get \begin{equation}\label{equation4}
   f(xy)+\mu(y)f(x\sigma(y))=2f(x)\mu(y)\chi(\sigma(y)),\;x,y\in
   M.\end{equation} Finally, by comparing  (\ref{equation3}),
   (\ref{equation4}) and using $f\neq 0$ we get
   $\chi(y)=\mu(y)\chi(\sigma(y))$ for all $y\in M.$\\
   By Substituting the expression of $f$  into (\ref{equation3}) and using $\chi(y)=\mu(y)\chi(\sigma(y))$ and $\mu(y\sigma(y))=1$ for all $y\in M$,
   we find after reduction that
   $\chi(x)\chi(y)[a(y)+a(\sigma(y))]=0$ for all $x,y\in M\backslash
   I_\chi$. Since $\chi\neq 0$ we get $a(y)+a(\sigma(y))=0$ for all $y\in M\backslash
   I_\chi$.\\ For the topological statement we use [13, Theorem
   3.18(d)]. This completes the proof.
\end{proof}
\section{Solutions of (\ref{eq8}) on groups and monoids}
In this section we solve the functional equation (\ref{eq8}) on
monoids. In the next proposition we show that if $(f,g,h)$ is a
complex-valued solution of equation (\ref{eq8}), then  $h$ satisfies
the $\mu$-sine subtraction law.
\begin{prop}
Let $M$ be a monoid, let $\sigma$ be a involutive automorphism on
$S$, let $\mu$ be a multiplicative function on $M$ such that
$\mu(x\sigma(x))=1$ for all $x\in M.$ Suppose that $f,g,h:
M\longrightarrow \mathbb{C}$ satisfy the functional equation
(\ref{eq8}). Suppose also that $g\neq 0$ and $h\neq 0$. Then\\
i) $h(\sigma(x))=-\mu(\sigma(x))h(x))$ for all $x\in M$.\\
ii) $h(xy)=h(yx)$ for all $x,y\in M$.\\
3i) $h$ satisfies the $\mu$-sine subtraction law (2.1).\\
4i) If $g(e)=0$, then $g=bh$ for some $b\in \mathbb{C}\setminus\{0\}$.\\
5i) If $g(e)\neq 0$, then $h$ satisfies the $\mu$-sine subtraction law with companion function $\frac{g}{g(e)}$.\\
Moreover, if $M$ is a topological monoid, and  $h\neq 0$ is
continuous, then the companion function  is also continuous.
\end{prop}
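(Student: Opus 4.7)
The plan is to follow the outline of the analogous result in [05], now in the presence of the multiplicative function $\mu$. The derivation starts from two auxiliary identities obtained from (\ref{eq8}). Setting $y=e$ yields $g(x)h(e)=0$, hence $h(e)=0$ since $g\neq 0$. Setting $x=e$ gives the identity $f(y)-\mu(y)f(\sigma(y))=g(e)h(y)$, which we label (A). Next, computing $f(xyz)$ in two ways by applying (\ref{eq8}) to the associations $(xy)z$ and $x(yz)$, and applying (\ref{eq8}) once more to reduce $f(\sigma(z)xy)$, yields after cancellation
\[
g(xy)h(z)+\mu(z)g(\sigma(z)x)h(y)=g(x)h(yz), \qquad x,y,z\in M,
\]
which we label (B); its specialisation $x=e$ reads $g(y)h(z)+\mu(z)g(\sigma(z))h(y)=g(e)h(yz)$, which we label (C).

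For (i), apply (A) to $\sigma(y)$, multiply by $\mu(y)$, and use $\mu(y\sigma(y))=1$ to deduce $g(e)h(y)=-\mu(y)g(e)h(\sigma(y))$; when $g(e)\neq 0$ this is (i). When $g(e)=0$, part (iv) must be handled first: (C) collapses to $g(y)h(z)=-\mu(z)g(\sigma(z))h(y)$, and fixing $z_0$ with $h(z_0)\neq 0$ produces $g=bh$ for some $b\in\mathbb{C}\setminus\{0\}$. Substituting $g=bh$ back into (\ref{eq8}), using it with $(x,y)$ replaced by $(\sigma(y),\sigma(x))$, and eliminating $f(\sigma(yx))$ through (A)---which in this case reads $f(y)=\mu(y)f(\sigma(y))$---yields, after a further use of (\ref{eq8}) at $y\mapsto\sigma(y)$, the identity $h(\sigma(x))h(\sigma(y))=-\mu(\sigma(x))h(x)h(\sigma(y))$; choosing $y$ with $h(\sigma(y))\neq 0$ gives (i).

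With (i) in hand, (v) and (iii) follow quickly when $g(e)\neq 0$: set $l:=g/g(e)$, rewrite (C) as a bilinear formula for $h(yz)$, replace $z$ by $\sigma(z)$, and apply (i) to obtain $\mu(z)h(y\sigma(z))=h(y)l(z)-h(z)l(y)$, which is precisely the $\mu$-sine subtraction law with companion $g/g(e)$. For (iii) in the degenerate case $g(e)=0$, substitute $g=bh$ in (B) to obtain
\[
h(xy)h(z)+\mu(z)h(\sigma(z)x)h(y)=h(x)h(yz),
\]
which we label (D). Fix $x_0$ with $h(x_0)\neq 0$; setting $x=x_0$ in (D), replacing $z$ by $\sigma(z)$, and using (i) produces a subtraction relation of the form $\mu(z)h(y\sigma(z))=h(y)\tilde l(z)-h(z)l'(y)$ with \emph{two} a priori different candidate companions $\tilde l(z)=h(zx_0)/h(x_0)$ and $l'(y)=h(x_0y)/h(x_0)$, whose equality amounts to (ii).

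For (ii) when $g(e)\neq 0$, use (\ref{eq8}) with $(x,y)$ replaced by $(\sigma(y),\sigma(x))$, eliminate $f(\sigma(yx))$ and $f(yx)$ using (A) applied to $yx$, and invoke (i) to deduce the dual identity $g(e)h(yx)=g(x)h(y)+\mu(y)g(\sigma(y))h(x)$; comparing with (C) yields $g(e)[h(yz)-h(zy)]=0$. When $g(e)=0$, replace $z$ by $\sigma(z)$ in (D) and use (i) to obtain $\mu(z)h(x)h(y\sigma(z))=h(y)h(zx)-h(z)h(xy)$; the specialisation $y=x_0$, $z=\sigma(x_0)$ together with (i) then yields the scalar identity
\[
h(xx_0)+\mu(x_0)h(\sigma(x_0)x)=\frac{h(x_0^2)}{h(x_0)}\,h(x),
\]
and inserting this into the two expressions for $h(xy)$ and $h(yx)$ given by (D) with $z=x_0$ forces the difference to zero. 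Spotting this cascade of specialisations is the main obstacle of the proof; once available, $\tilde l=l'$ in the previous paragraph, yielding (iii) with $l(y):=h(yx_0)/h(x_0)$. The continuity claim is routine: the $\mu$-sine subtraction law expresses $l(y)$ linearly in $h$ at any fixed $x_0$ with $h(x_0)\neq 0$, so continuity of $h$ transfers to the companion (provided $\mu$ and $\sigma$ are continuous).
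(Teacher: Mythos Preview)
Your argument is correct, but it follows a genuinely different path from the paper's. The paper's key identity is obtained by substituting the three pairs $(x,yz)$, $(\sigma(y),\sigma(z)x)$, and $(z,\sigma(xy))$ into (\ref{eq8}) and combining the results to produce the cyclic relation
\[
g(\sigma(y))h(\sigma(z)x)+\mu(\sigma(z)x)g(z)h(\sigma(xy))+\mu(\sigma(zy))g(x)h(yz)=0.
\]
From this single identity the paper reads off a representation $h(yz)=\mu(y)g(\sigma(y))l(z)-\mu(y)g(z)l(\sigma(y))$, and (i) and (ii) then drop out \emph{uniformly}, with no split into the cases $g(e)=0$ and $g(e)\neq 0$; that case distinction enters only afterwards, to identify the companion function. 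By contrast, your route rests on the associativity identity
\[
g(xy)h(z)+\mu(z)g(\sigma(z)x)h(y)=g(x)h(yz)
\]
and its specialisation (C). This makes (iv), (v) and (iii) almost immediate, but forces you to treat (i) and especially (ii) separately in each case; in particular your proof of centrality when $g(e)=0$ is the most delicate step and requires the auxiliary scalar identity for $h(xx_0)+\mu(x_0)h(\sigma(x_0)x)$. Two small remarks: first, once you have $g=bh$ in the case $g(e)=0$, plugging this directly into (C) gives $h(y)[h(z)+\mu(z)h(\sigma(z))]=0$ and hence (i) in one line, which is shorter than the detour through (\ref{eq8}) at $(\sigma(y),\sigma(x))$. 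Second, your continuity proviso ``provided $\mu$ and $\sigma$ are continuous'' is implicit in the paper's standing hypotheses and is indeed needed.
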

\begin{proof}
We follow  the path of the proof of Proposition 3.1 in [8].\\
By substituent $(x,yz), (\sigma(y),\sigma(z)x)$ and $(z,\sigma(xy))$
and $(z,\sigma(xy))$ in (\ref{eq8}) we obtain
 \begin{equation}
f(xyz)-\mu(yz)f(\sigma(yz)x)=g(x)h(yz),\end{equation}
\begin{equation}
f(\sigma(yz)x)-\mu(\sigma(z)x)f(z\sigma(xy))=g(\sigma(y))h(\sigma(z)x),\end{equation}
\begin{equation}
f(z\sigma(xy))-\mu(\sigma(xy))f(xyz)=g(z)h(\sigma(xy)).\end{equation}
By multiplying (4.1) by $\mu(\sigma(xy))$ we obtain that
\begin{equation}
\mu(\sigma(xy))f(xyz)-\mu(\sigma(x)z)f(\sigma(yz)x)=\mu(\sigma(xy))g(x)h(yz).\end{equation}
By adding (4.3) and (4.4) we obtain
\begin{equation}
f(z\sigma(xy))-\mu(\sigma(x)z)f(\sigma(yz)x)=g(z)h(\sigma(xy))+\mu(\sigma(xy))g(x)h(yz).\end{equation}
By multiplying (4.5) by $\mu(\sigma(z)x)$ we obtain
 \begin{equation}
\mu(\sigma(z)x)f(z\sigma(xy))-f(\sigma(yz)x)=\mu(\sigma(z)x)g(z)h(\sigma(xy))+\mu(\sigma(zy))g(x)h(yz).\end{equation}
By adding (4.6) and (4.2) we obtain
\begin{equation}0=g(\sigma(y))h(\sigma(z)x)+\mu(\sigma(z)x)g(z)h(\sigma(xy))+\mu(\sigma(zy))g(x)h(yz).\end{equation}
Setting $x_{0}$ such that $g(x_{0})\neq 0$ and the fact that
$\mu(x\sigma(x))=\mu(x)\mu(\sigma(x))=1$ for all $x\in M$, we get
that
\begin{equation}h(yz)=\mu(y)g(\sigma(y))l(z)+g(z)l_{1}(y), \; y,z\in M \end{equation}
where $l, l_{1}$ are complex valued functions on $M$. Using (4.8) in
(4.7) we obtain for all $x,y,z\in M$
\begin{equation}g(x)g(\sigma(y))\{l_{1}(\sigma(z))+\mu(\sigma(z))l(z)\} +
g(x)g(z)\{\mu(\sigma(z))l(\sigma(y))+\mu(\sigma(zy))l_{1}(y)\}
+\end{equation}
\begin{equation*}g(z)g(\sigma(y))\{\mu(\sigma(z))l(x)+\mu(\sigma(z)x)l_{1}(\sigma(x))\} =0.\end{equation*}
Putting $x=x_{0}$, $y=\sigma(x_{0})$, the equation (4.9) becomes
\begin{equation}l_{1}(\sigma(z))+\mu(\sigma(z))l(z)=c\mu(\sigma(z))g(z),\; z\in
M,\end{equation} where $c\in \mathbb{C}$ is a constant. By putting
(4.10) in (4.9) we obtain $3\mu(\sigma(z))cg(x)g(\sigma(y))g(z)=0$
for all $x,y,z\in S$. Since $g\neq 0$ and $\mu(\sigma(z))\neq 0$ it
follows that $c=0$ and then $l_{1}(\sigma(z))=-\mu(\sigma(z))l(z)$
for all $z\in S$. So that equation (4.8) becomes
\begin{equation}h(yz)=\mu(y)g(\sigma(y))l(z)-\mu(y)g(z)l(\sigma(y)),\; x,y,z\in S\end{equation}
Replacing $(y,z)$ by $(\sigma(z),\sigma(y))$ in (4.11) we obtain
\begin{equation}h(\sigma(zy))=\mu(\sigma(zy))(\mu(y)g(z)l(\sigma(y))-\mu(y)g(\sigma(y))l(z))=-\mu(\sigma(zy))h(yz).\end{equation}
From which we obtain by putting $y=e$ that
\begin{equation}h(\sigma(z))=-\mu(\sigma(z))h(z), \;z\in M.\end{equation}
From (4.12) and (4.13) we get that $h$ a central function i.e. $h(yz)=h(zy)$ for all $y,z\in M$.\\
Next we consider two cases :\\
First case : Suppose $g(e)=0$. Let $z=e$ and $x=x_{0}$ in (4.7) give
that
\begin{equation}h(y)=-c\mu(y)g(\sigma(y)), \; y\in M\end{equation}
for some $c\in \mathbb{C}\setminus\{0\}$. By replacing $y$ by
$\sigma(y)$ in (4.14) we obtain that
$h(\sigma(y))=-c\mu(\sigma(y))g(y)$ for all $y\in M$. By using
(4.14) we get that $g=\frac{1}{c}h=bh$ where $b=\frac{1}{c}$ and
that
\begin{equation}g(\sigma(z))=-\mu(\sigma(z))g(z), \;z\in
S.\end{equation} Using (4.15) in (4.11) and setting $m=-bl$ we get
\begin{eqnarray*}h(yz)&=&\mu(y)g(\sigma(y))l(z)-\mu(y)g(z)l(\sigma(y))\\
&=&-\mu(y)\mu(\sigma(y))g(y)l(z)-\mu(y)g(z)l(\sigma(y))\\
&=&-g(y)l(z)-\mu(y)g(z)l(\sigma(y))\\
&=&-bh(y)l(z)-\mu(y)bh(z)l(\sigma(y))\\
&=&h(y)m(z)+\mu(y)h(z)m(\sigma(y)).\end{eqnarray*} By replacing $z$
by $\sigma(z)$ we obtain
\begin{equation}h(y\sigma(z))=h(y)(m\circ \sigma)(z)+\mu(y)h(\sigma(z))m(\sigma(y)), \; y,z\in M.\end{equation}
By multiplying (4.16) by $\mu(z)$ and by setting $n(z)= (m\circ
\sigma)(z)\mu(z)$ we obtain the $\mu$-sine subtraction law  with the
companion function $n$
 \begin{equation}\mu(z)h(y\sigma(z))=h(y)n(z)-h(z)n(y).\end{equation} This ends the first case.\\
 Second case : Suppose $g(e)\neq 0$. Then we obtain from (4.7) with $x=e$ that
\begin{equation}h(yz)=[\mu(y)g(\sigma(y))h(z)+g(z)h(y)]/g(e).\end{equation}
Interchanging $y$ and $z$ in (4.18) we get
\begin{equation}h(zy)=[\mu(z)g(\sigma(z))h(y)+g(y)h(z)]/g(e).\end{equation}
By replacing $z$ by $\sigma(z)$ (4.19) and multiplying (4.19) by
$\mu(z)$   we get
\begin{equation}\mu(z)h(\sigma(z)y)=h(y)\frac{g}{g(e)}(z)-\frac{g}{g(e)}(y)h(z), \; y,z\in M.\end{equation}
Since $h$ is central it follows that $h$ satisfies the $\mu$-sine
subtraction law  with the companion function $\frac{g}{g(e)}$
\begin{equation}\mu(z)h(y\sigma(z))=h(y)\frac{g}{g(e)}(z)-\frac{g}{g(e)}(y)h(z), \; y,z\in M.\end{equation}
\end{proof}
In the next two theorems we obtain the solutions of equation (1.9)
by using our results  for the  $\mu$-sine subtraction law.  We will
follow the method used in [8].\\ Let $\mathcal{N}_{\mu}(\sigma,S)$
be the nullspace  given by
$$\mathcal{N}_{\mu}(\sigma,S)=\{\theta: S\longrightarrow\mathbb{C} \; :\theta(xy)-\mu(y)\theta(\sigma(y)x)=0, \;x,y\in G\}.$$
In the next theorem we consider the group case
\begin{thm}
Let $G$ be a group, let $\sigma$ be a involutive automorphism on
$G$, let $\mu:G \longrightarrow \mathbb{C}$ be a multiplicative
function such that $\mu(x\sigma(x))=1$ for all $x\in G.$  Suppose
that  $f, g, h: G \longrightarrow \mathbb{C}$ satisfy functional
equation (1.9). Suppose also that $g \neq 0$ and $h \neq 0$. Then
there exists a character $\chi$ of $G$, constants $c$, $c_1$,
$c_{2}\in \mathbb{C}$,
and a function $\theta\in \mathcal{N}_{\mu}(\sigma,S)$ such that one of the following holds\\
i) If $\chi\neq \mu\;\chi\circ\sigma$, then
$$h=c_{1}\frac{\chi-\mu\;\chi\circ\sigma}{2}, \; g=\frac{\chi+\mu\;\chi\circ\sigma}{2}+c_{2}\frac{\chi-\mu\;\chi\circ\sigma}{2},$$
$$f=\theta+\frac{c_{1}}{2}[c\frac{\chi-\mu\;\chi\circ\sigma}{2}+c_{2}\frac{\chi+\mu\;\chi\circ\sigma}{2}]$$
ii) If $\chi=\mu\;\chi\circ\sigma$, then
$$h=\chi A,\;\; g=\chi(c+c_{2}A), f=\theta+\chi A(\frac{c}{2}+\frac{c_{2}}{4}A).$$
where $A: G\longrightarrow \mathbb{C}\setminus\{0\}$ is an additive function such that $A\circ \sigma =-A\neq 0$.\\
Conversely, the formulas of (i) and (ii) define solutions of (1.9).\\
Moreover, if $G$ is a topological group, and and $f, g, h \in
\mathcal{C}(G)$, then $\chi, \mu\;\chi\circ\sigma, A, \theta\in
\mathcal{C}(G)$, while $A \in  \mathcal{C}(G)$.
\end{thm}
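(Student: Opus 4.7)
The plan is to combine Proposition 4.1 with Theorem 2.1 to pin down $h$ and $g$, then reduce the determination of $f$ to exhibiting one particular solution of (\ref{eq8}) modulo the nullspace $\mathcal{N}_{\mu}(\sigma,G)$.

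First, by Proposition 4.1, $h$ satisfies the $\mu$-sine subtraction law (2.1) with a companion function $l$: when $g(e)\neq 0$ we have $l = g/g(e)$, while when $g(e)=0$ the proposition yields $g = bh$ for some $b\neq 0$ together with a suitable companion $l$. The proposition also records that $h\circ\sigma = -(\mu\circ\sigma)h$ and that $h$ is central. Applying Theorem 2.1 to $(h,l)$ splits the analysis into the two cases of the statement: case (i), $\chi\neq\mu\chi\circ\sigma$, gives $h = c_{1}(\chi-\mu\chi\circ\sigma)/2$ and the announced form of $l$; case (ii), $\chi=\mu\chi\circ\sigma$, gives $h = \chi A$ and $l = \chi(1+c_{2}A)$ for an additive $A$ with $A\circ\sigma = -A$. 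Reading off $g$ from $l$ (absorbing $g(e)$ into $\chi$, or using that $g$ is proportional to $h$ in the degenerate subcase) yields the stated formulas for $g$.

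With $g,h$ determined, equation (\ref{eq8}) is linear in $f$ with prescribed right-hand side, so its solution set is an affine coset of $\mathcal{N}_{\mu}(\sigma,G)$, and it suffices to exhibit one particular solution $f_{0}$ of the claimed shape, writing $f = \theta + f_{0}$. In case (i) I would try the ansatz $f_{0} = \alpha\chi^{+} + \beta\chi^{-}$ with $\chi^{\pm} := (\chi\pm\mu\chi\circ\sigma)/2$. A short direct computation, using multiplicativity of $\chi$ and $\mu\chi\circ\sigma$ together with $\mu(y)\chi^{\pm}(\sigma(y)) = \pm\chi^{\pm}(y)$, gives the key identities
\[
\chi^{+}(xy) - \mu(y)\chi^{+}(\sigma(y)x) = 2\chi^{-}(x)\chi^{-}(y), \qquad \chi^{-}(xy) - \mu(y)\chi^{-}(\sigma(y)x) = 2\chi^{+}(x)\chi^{-}(y),
\]
which reduce (\ref{eq8}) to a linear system for $(\alpha,\beta)$ whose solution matches the stated coefficients. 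Case (ii) is handled similarly with the ansatz $f_{0} = (c/2)\chi A + (c_{2}/4)\chi A^{2}$; leveraging $A\circ\sigma = -A$, one computes $(\chi A)(xy) - \mu(y)(\chi A)(\sigma(y)x) = 2\chi(x)\chi(y)A(y)$ and $(\chi A^{2})(xy) - \mu(y)(\chi A^{2})(\sigma(y)x) = 4\chi(x)\chi(y)A(x)A(y)$, whence the announced $f_{0}$ verifies $f_{0}(xy) - \mu(y)f_{0}(\sigma(y)x) = g(x)h(y)$.

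The converse assertion, that the stated triples really solve (\ref{eq8}), is direct substitution, and the continuity statement is routine: $h\in\mathcal{C}(G)$ forces $l\in\mathcal{C}(G)$ by Proposition 4.1, hence the continuity of both $\chi\pm\mu\chi\circ\sigma$ and so of $\chi,\mu\chi\circ\sigma,A$, from which $\theta = f - f_{0}\in\mathcal{C}(G)$ follows. The main obstacle I anticipate is the coefficient bookkeeping that translates Theorem 2.1's free parameters into the precise triple $(c,c_{1},c_{2})$ appearing in the statement, and that cleanly folds the $g(e)=0$ subcase of Proposition 4.1 into the unified case (i) formula.
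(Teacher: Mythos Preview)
Your proposal is correct and follows essentially the same route as the paper: invoke Proposition~4.1 to reduce to the $\mu$-sine subtraction law, apply Theorem~2.1 to determine $h$ and $g$, then solve for $f$ as a particular solution plus an element of $\mathcal{N}_{\mu}(\sigma,G)$. The paper carries out the last step by direct substitution and appeals to the analogous computations in references [8] and [9], whereas you organize it via the clean identities for $\chi^{\pm}$, $\chi A$, and $\chi A^{2}$ and an explicit ansatz for $f_{0}$; this is merely a presentational difference, and your anticipated ``coefficient bookkeeping'' obstacle is exactly the point where the paper's own proof is also somewhat loose.
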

The proof of the theorem 4.2 will be integrated into that of theorem
4.3 in which we consider the monoid case
\begin{thm}
Let $M$ be a monoid which is generated by its squares, let $\sigma$
be an involutive automorphism on $M$, let $\mu:M \longrightarrow
\mathbb{C}$ be a multiplicative function such that
$\mu(x\sigma(x))=1$ for all $x\in M.$ Suppose that $f, g, h: M
\longrightarrow \mathbb{C}$ satisfy functional equation (1.9).
Suppose also that $g \neq 0$ and $h \neq 0$. Then there exists a
multiplicative function $\chi: M\longrightarrow \mathbb{C}$
$\chi\neq 0$, constants $c$, $c_{1}$, $c_{2}\in \mathbb{C}$,
and a function $\theta\in \mathcal{N}_{\mu}(\sigma,S)$ such that one of the following holds\\
i) If $\chi\neq \mu\;\chi\circ\sigma$, then
$$h=c_{1}\frac{\chi-\mu\;\chi\circ\sigma}{2}, \; g=\frac{\chi+\mu\;\chi\circ\sigma}{2}+c_{2}\frac{\chi-\mu\;\chi\circ\sigma}{2},$$
$$f=\theta+\frac{c_{1}}{2}[c\frac{\chi-\mu\;\chi\circ\sigma}{2}+c_{2}\frac{\chi+\mu\;\chi\circ\sigma}{2}]$$
ii) If $\chi=\mu\;\chi\circ\sigma$, then $h(x)=g(x)=0$ and
$f(x)=\theta(x)$ for $x\in I_{\chi}$, and
$$h(x)=\chi(x)A(x),\;\; g(x)=\chi(x)(c+c_{2}A(x)), f(x)=\theta(x)+\chi(x) A(x)(\frac{c}{2}+\frac{c_{2}}{4}A(x))$$ for $x\in M\setminus I_{\chi}$
where $A: M\setminus I_{\chi}\longrightarrow \mathbb{C}\setminus\{0\}$ is an additive function such that $A\circ \sigma =-A\neq 0$.\\
Furthermore, if $M$ is a topological monoid  and $k \in C(M)$, then
$l$, $\chi$, $\mu\chi\circ \sigma$, $A\in \mathcal{C}(M)$.
Conversely, the formulas of (i) and (ii) define solutions of (1.9).\\
Moreover, if $M$ is a topological monoid, and $f, g, h \in
\mathcal{C}(M)$, then $\chi, \mu\;\chi\circ\sigma, A, \theta\in
\mathcal{C}(M)$, while $A\in \mathcal{C}(M\setminus I_{\chi})$.
\end{thm}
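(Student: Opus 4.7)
My plan is to combine Proposition 4.1 with Proposition 2.2 to pin down $h$ and $g$, and then solve for $f$ by a tailored ansatz. Because $g\neq 0$ and $h\neq 0$, Proposition 4.1 applies and gives that $h$ satisfies the $\mu$-sine subtraction law (2.1), with companion function $l=g/g(e)$ when $g(e)\neq 0$, and with $g=bh$ for some nonzero constant $b$ when $g(e)=0$. Thus the classification of the pair $(h,g)$ is reduced to the monoid version of the $\mu$-sine subtraction law, i.e.\ to Proposition 2.2.

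Writing $P:=\frac{\chi-\mu\,\chi\circ\sigma}{2}$ and $Q:=\frac{\chi+\mu\,\chi\circ\sigma}{2}$ for brevity, Proposition 2.2 in case (i), where $\chi\neq\mu\,\chi\circ\sigma$, produces $h=c_1 P$ together with companion $l=Q+c_2 P$. Because $l(e)=1$, after absorbing $g(e)$ into the free constants one obtains the stated form of $g$. In case (ii), where $\chi=\mu\,\chi\circ\sigma$, Proposition 2.2 (using here the hypothesis that $M$ is generated by its squares) gives $h(x)=\chi(x)A(x)$ on $M\setminus I_\chi$ with $A$ additive and $A\circ\sigma=-A$, companion $l(x)=\chi(x)(1+c_1 A(x))$ on $M\setminus I_\chi$, and $h=g=0$ on $I_\chi$, matching $g=\chi(c+c_2 A)$ on $M\setminus I_\chi$ after renaming constants.

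With $g$ and $h$ known, equation (1.9) becomes linear in $f$, so the general solution has the shape $f=f_0+\theta$ with $\theta\in\mathcal{N}_{\mu}(\sigma,M)$ and $f_0$ any particular solution. To produce $f_0$ I would use the identities, verified directly from $\sigma^2=\mathrm{id}$, $\chi$ multiplicative and $\mu(x\sigma(x))=1$,
\begin{equation*}
P(xy)-\mu(y)P(\sigma(y)x)=2P(y)Q(x),\qquad Q(xy)-\mu(y)Q(\sigma(y)x)=2P(y)P(x),
\end{equation*}
in case (i), and in case (ii) the analogue (using $A\circ\sigma=-A$)
\begin{equation*}
(\chi A^k)(xy)-\mu(y)(\chi A^k)(\sigma(y)x)=\chi(x)\chi(y)\bigl[(A(x)+A(y))^k-(A(x)-A(y))^k\bigr].
\end{equation*}
The ansatz $f_0=\alpha P+\beta Q$ in case (i), respectively $f_0=\chi(\alpha A+\beta A^2)$ in case (ii), then turns (1.9) into a $2\times 2$ linear system in $(\alpha,\beta)$ whose solution yields the explicit formulas in the theorem.

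For the monoid case (ii) one must still handle $x\in I_\chi$: since $g$ and $h$ vanish on $I_\chi$ and $I_\chi$ is a two-sided ideal, the right-hand side of (1.9) is identically zero whenever $x\in I_\chi$ or $y\in I_\chi$, so the restriction of $f$ to $I_\chi$ is free up to an element of $\mathcal{N}_{\mu}(\sigma,M)$, matching the stated description $f(x)=\theta(x)$ on $I_\chi$. Conversely, direct substitution using the identities above shows that the stated triples $(f,g,h)$ do solve (1.9), and the continuity clauses follow immediately from the continuity assertions in Proposition 2.2. The main obstacle I anticipate is the bookkeeping around $I_\chi$ in case (ii): the explicit formula for $f_0$ is defined a priori only on $M\setminus I_\chi$, and one must patch it consistently with arbitrary null-space data on $I_\chi$, which relies crucially on $I_\chi$ being a two-sided ideal and on $M$ being generated by its squares so that Proposition 2.2 is available.
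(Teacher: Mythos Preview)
Your proposal is correct and follows essentially the same route as the paper: apply Proposition~4.1 to split into the cases $g(e)=0$ and $g(e)\neq 0$, invoke Proposition~2.2 to determine $h$ and $g$, and then exhibit a particular solution $f_0$ so that $\theta:=f-f_0\in\mathcal{N}_{\mu}(\sigma,M)$, with the $I_\chi$-patching in case~(ii) handled via the ideal property. The only cosmetic difference is that the paper finds $f_0$ by expanding $g(x)h(y)$ directly and recognizing it as $\phi(xy)-\mu(y)\phi(\sigma(y)x)$ for an explicit $\phi$, whereas you reach the same $f_0$ via the ansatz $f_0=\alpha P+\beta Q$ (resp.\ $f_0=\chi(\alpha A+\beta A^2)$) using the identities you recorded; these are equivalent computations.
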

\begin{proof}
According to Proposition 4.1 we have two following cases: \\
First case : Suppose that $g(e)=0$, then $h$ satisfies the
$\mu$-sine subtraction law and $g=bh$ where $b\in
\mathbb{C}\setminus\{0\}$. According to Theorem 2.1 and Proposition
2.2 we get (for $M$ is a group or a monoid) that if $\chi\neq
\mu\chi\circ \sigma$, then
$h=c_{1}\frac{\chi-\mu\;\chi\circ\sigma}{2}$ where $c_{1}\in
\mathbb{C}$. Since $g=bh$, then
$g=c_{2}\frac{\chi-\mu\;\chi\circ\sigma}{2}$ where $c_{2}\in
\mathbb{C}$. Subsisting $g$ and $h$ in (3.1) we get for all $x,y\in
M$
\begin{eqnarray*}f(xy)-\mu(y)f(\sigma(y)x)&=&g(x)h(y)\\
&=&\frac{c_{1}c_{2}}{4}[\chi(x)-\mu(x)\chi(\sigma(x))][\chi(y)-\mu(y)\chi(\sigma(y))]\\&=&
\frac{c_{1}c_{2}}{4}[\chi(xy)-\mu(y)\chi(\sigma(y)x)-\mu(x)\chi(\sigma(\sigma(y)x))\\&&+\mu(xy)\chi(\sigma(xy))].\end{eqnarray*}
Let $\theta=f-\frac{c_{1}c_{2}}{4}(\chi+\mu\;\chi\circ\sigma)$ we
have $\theta\in \mathcal{N}_{\mu}(\sigma,S)$ and
$f=\theta+\frac{c_{1}}{2}[c\frac{\chi-\mu\;\chi\circ\sigma}{2}+c_{2}\frac{\chi+\mu\;\chi\circ\sigma}{2}]$ with $c=0$.\\
Now, if $\chi= \mu\chi\circ \sigma$.\\
When $M$ is a group then we get from Theorem 2.1 that $h=\chi A$
where $A$ is an additive function such that $A\circ \sigma=-A\neq
0$. Since $g=bh$, then $g=b\chi A=c_{2}\chi A$. Subsisting $g$ and
$h$ in (3.1) we get
\begin{eqnarray*} f(xy)-\mu(y)f(\sigma(y)x)&=&g(x)h(y)c_{2}\chi(x)A(x)\chi(y)A(y)\\
&=&\frac{c_{2}}{4}[\chi(y)A(xy)^{2}-\chi(\sigma(y))A(\sigma(y)x)^{2}].\end{eqnarray*}
Let $\theta=f-c_{2}\frac{\chi A^{2}}{4}$. Then $\theta\in \mathcal{N}_{\mu}(\sigma,S)$.\\
When $M$ is a monoid, by Proposition 2.2 and and by the same way as
in [6] we have $\theta=\theta_{1}\cup \theta_{2}$ where
where $\theta_{1}(x)= f(x)-c_{2}\frac{\chi(x) A^{2}(x)}{4}$ on $M\setminus I_{\chi}$ and $\theta_{2}(x)=f(x)$ on $I_{\chi}$.\\
Second case : Suppose $g(e)\neq 0$. We have $h$ and $\frac{g}{g(e)}$
play the role of $k$ and $l$ respectively in Theorem 2.1 or in
Proposition 2.2. If $\chi\neq \mu\;\chi\circ\sigma$, then
$h=c_{1}\frac{\chi-\mu\;\chi\circ\sigma}{2}$ where $c_{1}\in
\mathbb{C}$ and
$g=c\frac{\chi+\mu\;\chi\circ\sigma}{2}+c_{2}\frac{\chi-\mu\;\chi\circ\sigma}{2}$.
By the same way as in [9] we get that $\theta=f-c_{1}\frac{[(c+c_{2})\chi-(c-c_{2})\chi\circ\sigma]}{4}\in\mathcal{N}_{\mu}(\sigma,S)$.\\
Finally, if $g(e)\neq 0$ and $\chi=\mu\;\chi\circ\sigma$ we get the
remainder by the same way as in [9].
\end{proof}\section{Applications: Solutions of equation (\ref{EQ1})
on groups and monoids}In this section, we use the results obtained
in the previous paragraph to solve the functional equation
(\ref{EQ1}) on groups and monoids. We proceed as follows to reduce
the equation to the functional equation (\ref{eq7}) and  (\ref{eq8})
so that we can apply Theorem 3.1, Theorem 3.2,  Theorem 4.2. and
Theorem 4.3.
\begin{thm} Let $G$ be a group, and $\sigma$ an homomorphism
involutive of  $G$. Let $\mu:G \longrightarrow \mathbb{C}$ be a
multiplicative function such that $\mu(x\sigma(x))=1$ for all $x\in
G$. Suppose that the functions  $f, g, h: G \longrightarrow
\mathbb{C}$ satisfy the functional equation (\ref{EQ1}). Suppose
also that $f+g\neq0.$ Then there exists a character $\chi$ of $G$,
constants $\alpha\in \mathbb{C}^{\ast}$, $c_1$, $c_{2}\in
\mathbb{C}$, and a function $\theta\in \mathcal{N}_{\mu}(\sigma,S)$
such that one of the following holds\\(a) If $\chi\neq
\mu\;\chi\circ\sigma$, then
$f=\frac{1}{2}[(1+\frac{c_1c_2}{2})\frac{\chi+\mu\;\chi\circ\sigma}{2}+2c_2\frac{\chi-\mu\;\chi\circ\sigma}{2}+\theta]$;
    $g=\frac{1}{2}[(1-\frac{c_1c_2}{2})\frac{\chi+\mu\;\chi\circ\sigma}{2}-\theta]$ and
    $h=\frac{1}{\alpha}[\frac{\chi+\mu\;\chi\circ\sigma}{2}+c_2\frac{\chi-\mu\;\chi\circ\sigma}{2}]$.\\(b) If $\chi= \mu\;\chi\circ\sigma$ then  $f=\frac{c_2\chi(2+A)+\theta+\chi
   A c_2(1+\frac{A}{4})}{2}$; $g=\frac{c_2\chi(2+A)-\theta-\chi
   A c_2(1+\frac{A}{4})}{2}$ and $h=\frac{1}{\alpha}c_2\chi(2+A).$
\end{thm}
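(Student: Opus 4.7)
The plan is to reduce equation (\ref{EQ1}) to Wilson's variant (\ref{eq7}) and the sine-subtraction variant (\ref{eq8}), whose solutions are already known from Theorems 3.1, 3.2, 4.2 and 4.3. I will start by setting $y=e$ in (\ref{EQ1}) to obtain $f+g = h(e)\,h$; since $f+g\neq 0$ the constant $\alpha := h(e)$ is nonzero. Setting $x=e$ and comparing with this identity yields $g(y) = \mu(y)\,g(\sigma(y))$. Substituting $f = \alpha h - g$ back into (\ref{EQ1}) produces the auxiliary identity
\[
g(xy) - \mu(y)\,g(\sigma(y)x) = \alpha\,h(xy) - h(x)h(y).
\]

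The pivotal step is to show that $g$ is central. I will apply (\ref{EQ1}) to a triple $(x,y,z)$ under the two associativity groupings $x(yz) = (xy)z$ and then set $x=e$; after simplifying with the $\sigma$-homomorphism property and $g = \mu(g\circ\sigma)$, this yields a companion identity $g(zy) - \mu(z)\,g(\sigma(z)y) = \alpha\,h(yz) - h(y)h(z)$. Subtracting it from the auxiliary identity yields $\mu(x)\,g(\sigma(x)y) - \mu(y)\,g(\sigma(y)x) = \alpha[h(xy) - h(yx)]$. Independently, the symmetry $h(x)h(y) = h(y)h(x)$ applied directly to (\ref{EQ1}) gives $f(xy) - f(yx) = \mu(x)\,g(\sigma(x)y) - \mu(y)\,g(\sigma(y)x)$. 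Combined, these yield $f(xy) - f(yx) = \alpha[h(xy) - h(yx)]$, and since $f+g = \alpha h$ forces $(f+g)(xy) - (f+g)(yx) = \alpha[h(xy)-h(yx)]$ as well, subtraction identifies $g(xy) = g(yx)$.

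With $g$ central, I will add (\ref{EQ1}) to the equation obtained from the substitution $(x,y)\to(\sigma(y),x)$; using $\sigma(uv) = \sigma(u)\sigma(v)$, $g = \mu(g\circ\sigma)$, $\mu(x)\mu(\sigma(x))=1$ and the centrality of $g$, the $f$- and $g$-terms on the left collapse to $\alpha[h(xy) + \mu(y)h(\sigma(y)x)]$, while the right becomes $h(x)[h(y) + \mu(y)h(\sigma(y))]$. Thus $h$ satisfies (\ref{eq7}) with companion $\phi := (h + \mu\,h\circ\sigma)/(2\alpha)$, and Theorem 3.1 (groups) or Theorem 3.2 (monoids) identifies $h$ through a character $\chi$, producing the dichotomy $\chi\neq\mu\chi\circ\sigma$ versus $\chi=\mu\chi\circ\sigma$. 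Finally, set $\eta := f - g$; substituting $f = (\alpha h + \eta)/2$, $g = (\alpha h - \eta)/2$ in (\ref{EQ1}) and invoking the Wilson identity for $h$ gives
\[
\eta(xy) - \mu(y)\,\eta(\sigma(y)x) = 2\,h(x)\bigl[h(y) - \alpha\,\phi(y)\bigr],
\]
an instance of (\ref{eq8}). Theorem 4.2 (or 4.3) applied to the triple $(\eta, 2h, h-\alpha\phi)$ then pins $\eta$ down modulo $\mathcal{N}_{\mu}(\sigma,S)$, and reassembling $f=(\alpha h+\eta)/2$, $g=(\alpha h-\eta)/2$ yields the formulas in (a) and (b). The main obstacle will be the centrality of $g$: neither the associativity trick nor the $h(x)h(y)$-symmetry suffices alone, but their combination — mediated by $g = \mu(g\circ\sigma)$ — gives exactly the cancellation needed.
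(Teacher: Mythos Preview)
Your proposal is correct and follows essentially the same decomposition as the paper: derive $g=\mu\,g\circ\sigma$ from the substitutions $x=e$ and $y=e$, then split into the Wilson-type equation (\ref{eq7}) for $f+g=\alpha h$ and the equation (\ref{eq8}) for $\eta=f-g$, and invoke Theorems~3.1 and~4.2. The one difference is your detour through the centrality of $g$. That argument is valid but unnecessary: in your own step~5, once you multiply the $(x,y)\mapsto(\sigma(y),x)$ instance of (\ref{EQ1}) by $\mu(y)$, the term $\mu(xy)\,g(\sigma(x)\sigma(y))$ collapses to $g(xy)$ using only the relation $g=\mu\,g\circ\sigma$, and the left side already rearranges to $(f+g)(xy)+\mu(y)(f+g)(\sigma(y)x)$ with no commutation of arguments in $g$. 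The paper carries out exactly this direct computation. So the ``main obstacle'' you flag is a phantom; the remainder of your argument coincides with the paper's.
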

\begin{proof} Let $f,g,h$ : $G\longrightarrow \mathbb{C}$
satisfy the functional equation (\ref{EQ1}). The case $g=-f$ was
treated in Theorem 3.1 and Theorem 3.2. From now, on we assume that
$f+g\neq 0$. Let  $h_o:=\frac{h-\mu h\circ \sigma}{2}$ respectively
$h_e:=\frac{h+\mu h\circ \sigma}{2}$ denote the odd, respectively
even, part of $h$ with respect to $\mu$ and $\sigma$.\\
Setting $x=e$ in (\ref{EQ1}) gives us
\begin{equation}\label{equation51}
f(y)+\mu(y)g(\sigma(y))=h(e)h(y)\end{equation} for all $y\in G.$
Taking $y=e$ in (\ref{EQ1}) and using $\mu(e)=1$ we find
\begin{equation}\label{equation52}
f(x)+g(x)=h(e)h(x)\end{equation} for all $x\in G.$ So, by comparing
(\ref{equation51}) with (\ref{equation52}) we get
\begin{equation}\label{equation53}
g(x)=\mu(x)g(\sigma(x)),\;x\in G.\end{equation}
 We note that
 $(f+g)(xy)+\mu(y)(f+g)(\sigma(y)x)=f(xy)+\mu(y)g(\sigma(y)x)+g(xy)+\mu(y)f(\sigma(y)x)=h(x)h(y)+g(xy)+\mu(y)f(\sigma(y)x)$.
 By using (\ref{equation53}) we have
 $g(xy)=\mu(xy)g(\sigma(x)\sigma(y))$, then we get
 $g(xy)+\mu(y)f(\sigma(y)x)=\mu(y)f(\sigma(y)x)+\mu(xy)g(\sigma(x)\sigma(y))=\mu(y)[f(\sigma(y)x)+\mu(x)g(\sigma(x)\sigma(y))]=\mu(y)h(x)h(\sigma(y))$.
 Which implies that
 \begin{equation}\label{equation540}
 (f+g)(xy)+\mu(y)(f+g)(\sigma(y)x)=2h(x)h_e(y)\end{equation} for all
 $x,y\in G.$ From (\ref{equation52}) and the assumption that $f+g\neq
 0$ we get $h(e)\neq 0$. So, equation (\ref{equation540}) can be
 written as follows \begin{equation}\label{equation541}
 (f+g)(xy)+\mu(y)(f+g)(\sigma(y)x)=2(f+g)(x)\frac{h_e(y)}{h(e)}\end{equation} for all
 $x,y\in G.$ \\On the
 other hand by using similar computation used above, we obtain
\begin{equation}\label{equation542}
 (f-g)(xy)-\mu(y)(f-g)(\sigma(y)x)=2h(x){h_o(y)}=(f+g)(x)\frac{2h_o(y)}{h(e)}\end{equation} for all
 $x,y\in G$.  We can now  apply Theorem 3.1, Theorem 3.2, Theorem 4.2 and Theorem 4.3.\\
 If $h_o=0$, then $f-g\in \mathcal{N}(\sigma,G)$, so there exists $\theta\in \mathcal{N}_{\mu}(\sigma,G)$
 such that $f-g=\theta.$ Since $f,g$ satisfy (\ref{equation541}) then from Theorem 3.1
 we get the only possibility $f+g=\alpha^{2}\frac{\chi+\mu\;\chi\circ\sigma}{2}$ and $h=\alpha\frac{\chi+\mu\;\chi\circ\sigma}{2}$ for some character $\chi$ :
  $G\longrightarrow \mathbb{C}$ and a constant $\alpha\in \mathbb{C}$ and  we deduce that $f=\frac{1}{2}[\theta+\alpha^{2}(\frac{\chi+\mu\;\chi\circ\sigma}{2})]$;
  $g=\frac{1}{2}[-\theta+\alpha^{2}(\frac{\chi+\mu\;\chi\circ\sigma}{2})]$. We deal with case (i).\\
  So during the rest of the proof we will assume that $h_o\neq0.$ the function $f+g,$ $h_o$ are solution of equation (\ref{equation542})
  with $f+g\neq 0$ and $h_o\neq 0$, so we know from
  Theorem 4.2 that there are only the following $2$ possibilities:\\
  (i)
  $f-g=\theta+\frac{c_1}{2}[c\frac{\chi-\mu\;\chi\circ\sigma}{2}+c_2\frac{\chi+\mu\;\chi\circ\sigma}{2}]$;
  $f+g=\frac{\chi+\mu\;\chi\circ\sigma}{2}+c_2\frac{\chi-\mu\;\chi\circ\sigma}{2}$
  for some character $\chi$ on $G$ such that $\chi\neq\mu\;\chi\circ\sigma$, $\theta\in \mathcal{N}_{\mu}(\sigma,G)$ and constants $c,c_1,c_2\in
  \mathbb{C}$. So, we have
  $g=\frac{1}{2}[(1-\frac{c_1c_2}{2})\frac{\chi+\mu\;\chi\circ\sigma}{2}+(c_2-\frac{cc_1}{2})\frac{\chi-\mu\;\chi\circ\sigma}{2}-\theta]$;
   $f=\frac{1}{2}[(1+\frac{c_1c_2}{2})\frac{\chi+\mu\;\chi\circ\sigma}{2}+(c_2+\frac{cc_1}{2})\frac{\chi-\mu\;\chi\circ\sigma}{2}+\theta]$.
   Since $g=\mu g\circ\sigma$, then we have $c_2=\frac{cc_1}{2}$ and $f$, $g$ are as
   follows:
    $f=\frac{1}{2}[(1+\frac{c_1c_2}{2})\frac{\chi+\mu\;\chi\circ\sigma}{2}+2c_2\frac{\chi-\mu\;\chi\circ\sigma}{2}+\theta]$;
    $g=\frac{1}{2}[(1-\frac{c_1c_2}{2})\frac{\chi+\mu\;\chi\circ\sigma}{2}-\theta]$.  We
   deal with case (a).\\
   (ii) $f-g=\theta+\chi A(\frac{c}{2}+\frac{c_2}{4}A)$;
   $f+g=\chi(c+c_2A)$, where $\chi$ is a character on $G$ such that
   $\chi=\mu\;\chi\circ\sigma$, $A$ is an additive map on $G$ such
   that $A\circ\sigma=-A$, $\theta\in \mathcal{N}_{\mu}(\sigma,G)$ and $c,c_2\in
   \mathbb{C}$. So, we get $g=\frac{\chi(c+c_2A)-\theta-\chi
   A(\frac{c}{2}+\frac{c}{4})A}{2}$. Since $g=\mu g\circ\sigma$ so,
   we have $c=2c_2$. Consequently we have $g=\frac{c_2\chi(2+A)-\theta-\chi
   A c_2(1+\frac{A}{4})}{2}$; $f=\frac{c_2\chi(2+A)+\theta+\chi
   A c_2(1+\frac{A}{4})}{2}$, where $A$ : $G \longrightarrow \mathbb{C}$ is an additive function such that
$A\circ\sigma(x)=-A(x)$ for all $x\in G$. We deal with case (b).
   \end{proof}
\begin{thm} Let $M$ be a monoid which is generated by its squares, let $\sigma$ an
 involutive automorphism on  $M$. Let $\mu:M\longrightarrow \mathbb{C}$ be a
multiplicative function such that $\mu(x\sigma(x))=1$ for all $x\in
M$. Suppose that $f, g, h: M \longrightarrow \mathbb{C}$ satisfy the
functional equation (\ref{EQ1}). Suppose also that $f+g\neq0.$ Then
there exists a character $\chi$ of $M$, constants $\alpha\in
\mathbb{C}^{\ast}$, $c_1$, $c_{2}\in \mathbb{C}$, and a function
$\theta\in \mathcal{N}_{\mu}(\sigma,M)$ such that one of the
following holds\\(a) If $\chi\neq \mu\;\chi\circ\sigma$, then
$f=\frac{1}{2}[(1+\frac{c_1c_2}{2})\frac{\chi+\mu\;\chi\circ\sigma}{2}+2c_2\frac{\chi-\mu\;\chi\circ\sigma}{2}+\theta]$;
    $g=\frac{1}{2}[(1-\frac{c_1c_2}{2})\frac{\chi+\mu\;\chi\circ\sigma}{2}-\theta]$ and
    $h=\frac{1}{\alpha}[\frac{\chi+\mu\;\chi\circ\sigma}{2}+c_2\frac{\chi-\mu\;\chi\circ\sigma}{2}]$.
    \\(b) If $\chi= \mu\;\chi\circ\sigma$ then $f(x)=\frac{\theta(x)}{2}$, $g(x)=\frac{-\theta(x)}{2}$ and $h(x)=0$ for all $x\in I_\chi$;
    $f(x)=\frac{c_2\chi(2+A(x))+\theta(x)+\chi(x)
   A c_2(1+\frac{A(x)}{4})}{2}$;
   $g(x)=\frac{c_2\chi(2+A(x))-\theta(x)-\chi(x)
   A c_2(1+\frac{A(x)}{4})}{2}$ and
   $h(x)=\frac{1}{\alpha}c_2\chi(2+A(x))$ for all $x\in M\backslash I_\chi$ and where $A$ : $M\backslash I_\chi \longrightarrow \mathbb{C}$   is an additive function such that
$A\circ\sigma(x)=-A(x)$ for all $x\in M\backslash I_\chi$.
\end{thm}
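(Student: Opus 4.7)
The plan is to follow the architecture of the proof of Theorem 5.1 line by line, replacing the group-case inputs by their monoid analogues, namely Theorem 3.2 in place of Theorem 3.1 and Theorem 4.3 in place of Theorem 4.2. First I would dispose of the degenerate case $f + g = 0$, for which (\ref{EQ1}) collapses to an equation of the form (\ref{eq8}) already handled in Section 4. From here on I assume $f + g \neq 0$.

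Putting $x = e$ and then $y = e$ in (\ref{EQ1}) and comparing the two resulting identities, I would obtain both the symmetry $g = \mu\, g\circ\sigma$ and the non-vanishing $h(e) \neq 0$. Writing $h = h_e + h_o$ with $h_e = (h + \mu\, h\circ\sigma)/2$ and $h_o = (h - \mu\, h\circ\sigma)/2$, a direct manipulation identical to the one carried out for the group case produces the two equations
\begin{equation*}
(f+g)(xy) + \mu(y)(f+g)(\sigma(y)x) = 2(f+g)(x)\frac{h_e(y)}{h(e)},
\end{equation*}
\begin{equation*}
(f-g)(xy) - \mu(y)(f-g)(\sigma(y)x) = 2(f+g)(x)\frac{h_o(y)}{h(e)}.
\end{equation*}
The first is an instance of (\ref{eq7}) for the pair $(f+g,\, h_e/h(e))$, and the second is an instance of (\ref{eq8}) for the triple $(f-g,\, f+g,\, 2h_o/h(e))$.

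If $h_o = 0$, then $f - g \in \mathcal{N}_\mu(\sigma, M)$ and Theorem 3.2 applied to the first equation determines $f+g$ and $h_e$; the resulting triple is then a special instance of case (a). Otherwise I would apply Theorem 4.3 to the second equation, which splits into the sub-cases $\chi \neq \mu\,\chi\circ\sigma$ and $\chi = \mu\,\chi\circ\sigma$. In each sub-case the constraint $g = \mu\, g \circ \sigma$ derived at the outset pins down one of the free constants ($c_2 = cc_1/2$ in the first sub-case and $c = 2c_2$ in the second), reproducing the formulas of (a) and (b) after a straightforward algebraic rearrangement.

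The only genuinely new step, and the one I expect to be the main obstacle, concerns the behaviour on the ideal $I_\chi$ in case (b). Theorem 4.3 forces $h = g = 0$ and $f = \theta$ on $I_\chi$ while the additive function $A$ is only defined on $M \setminus I_\chi$, so the formulas inherited from the group-case argument must be restricted to $M \setminus I_\chi$ and the ideal values handled separately. Combining $g = \mu\, g \circ \sigma$ with the fact that $I_\chi$ is $\sigma$-invariant forces $f + g = 0$ on $I_\chi$, which gives $f = \theta/2$, $g = -\theta/2$ and $h = 0$ there. Finally one has to verify that the piecewise-defined $\theta$ belongs globally to $\mathcal{N}_\mu(\sigma, M)$; this rests, as in the proof of Theorem 4.3, on $I_\chi$ being a two-sided $\sigma$-invariant ideal, so that any product involving an $I_\chi$-factor remains in $I_\chi$ after the substitutions $xy$ and $\sigma(y)x$.
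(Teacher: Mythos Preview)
Your proposal is correct and follows essentially the same approach as the paper. The paper does not give a separate proof for Theorem 5.2; it is understood to be the verbatim adaptation of the proof of Theorem 5.1 with Theorem 3.2 and Theorem 4.3 substituted for Theorem 3.1 and Theorem 4.2, which is precisely what you outline---including the one genuinely new point, namely the separate treatment of $I_\chi$ in case (b), where Theorem 4.3(ii) already delivers $f+g=0$ and $f-g=\theta$ on $I_\chi$, yielding $f=\theta/2$, $g=-\theta/2$, $h=0$ there.
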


\vspace{1cm}
Belaid Bouikhalene\\ Departement of Mathematics and Informatics\\
Polydisciplinary Faculty, Sultan Moulay Slimane university, Beni Mellal, Morocco.\\
E-mail : bbouikhalene@yahoo.fr.\\\\
 Elhoucien Elqorachi, \\Department of Mathematics,
\\Faculty of Sciences, Ibn Zohr University, Agadir,
Morocco,\\
E-mail: elqorachi@hotmail.com

\end{document}